\documentclass[12pt,reqno]{amsart}
\usepackage{graphicx, xcolor, enumerate}
\usepackage{amsmath, amssymb}
\usepackage{here}
\usepackage{pstool}
\usepackage{amsthm} 
\usepackage{newtxmath,newtxtext}
\usepackage{bm}
\usepackage{lineno}
\usepackage{bigints}
\usepackage{caption}
\usepackage{color}
\usepackage{graphicx,color}
\usepackage{amsmath, amssymb, graphics}

\theoremstyle{plain} 
\newtheorem{theorem}{Theorem}[section]
\newtheorem{lemma}[theorem]{Lemma}
\newtheorem{corollary}[theorem]{Corollary}
\newtheorem{proposition}[theorem]{Proposition}
\newtheorem{fact}[theorem]{Fact}

\theoremstyle{definition}
\newtheorem{definition}[theorem]{Definition}
\newtheorem{remark}[theorem]{Remark}

\newtheorem*{maintheorem*}{Main Theorem}

\numberwithin{equation}{section}

\newenvironment{contentsinred}{\bgroup\color{magenta}}{\egroup}
\newenvironment{contentsinred2}{\bgroup\color{brown}}{\egroup}

\newcommand{\by}{\begin{contentsinred}}  
\newcommand{\ey}{\end{contentsinred}}

\newcommand{\hthree}{\mathbb{H}^3}

\newcommand{\sltc}{\mathrm{SL}(2,\mathbb{C})}
\newcommand{\hermtwo}{\mathrm{Herm}(2,\mathbb{C})}

\newcommand{\qthreep}{\mathbb{Q}^3_+}
\newcommand{\qthreem}{\mathbb{Q}^3_-}
\newcommand{\qthreept}{\mathring{\mathbb{Q}}^3_+}
\newcommand{\lfour}{\mathbb{L}^4}
\newcommand{\ithree}{\mathbb{I}^3}

\newcommand{\divz}{\operatorname{d}\!z}

\newcommand{\G}{\mathrm{G}}

\newcommand{\tr}[1]{\operatorname{tr} #1}

\newcommand{\h}{H}
\newcommand{\mcU}{\mathcal{U}}

\begin{document}
\title[Bernstein-type Theorems for CMC surfaces in the light cone]{Bernstein-type Theorems \\ for constant mean curvature surfaces \\ in the three-dimensional light cone}
\author[S. Akamine]{Shintaro Akamine}
\address[Shintaro Akamine]{
	College of Bioresource Sciences,
	Nihon University,
	1866 Kameino, Fujisawa, Kanagawa, 252-0880, Japan}
\email{akamine.shintaro@nihon-u.ac.jp}

\author[W. Lee]{Wonjoo Lee}
\address[Wonjoo Lee]{
	Department of Mathematics, Jeonbuk National University, Jeonju-si, Jeonbuk State 54896, Republic of Korea}
\email{w\_lee@jbnu.ac.kr}

\author[S-D. Yang]{Seong-Deog Yang}
\address[Seong-Deog Yang]{
	Department of Mathematics, Korea University, Seoul 02841, Republic of Korea}
\email{sdyang@korea.ac.kr}
\subjclass[2020]{Primary 53A10; Secondary 53B30, 35B08.}
\keywords{
	constant mean curvature surface,
	zero mean curvature surface,
	Bernstein-type theorem,
	three-dimensional light cone
}

\begin{abstract}
	We establish Bernstein-type theorems for entire constant mean curvature graphs in the three-dimensional light cone $\qthreep$ over the horosphere under the assumption that the Gaussian curvature $K$ is bounded below, by showing that such graphs are horospheres or spheres of $\qthreep$.
\end{abstract}

\maketitle
\section{Introduction}\label{Sec.1}
Bernstein's Theorem is one of the most beautiful theorems in the theory of minimal surfaces. Since the inception of the Theorem in the early 1920's, there have been numerous generalizations of it in various contexts.
For example, do Carmo and Lawson showed in \cite{doCL} that any non-parametric hypersurface of constant mean curvature in hyperbolic $n$-space $\mathbb{H}^n$ which is defined over an entire totally geodesic hyperplane $\mathbb{H}^{n-1}$ is a hypersphere. Here, a hypersphere is one of the totally umbilic hypersurfaces in $\mathbb{H}^n$, and it appears as a hypersurface of constant distance from a totally geodesic hypersurface. A hypersphere also arises, in the Minkowski space model of $\mathbb{H}^n$, as the intersection of $\mathbb{H}^n$ with a timelike hyperplane in Minkowski space $\mathbb{L}^{n+1}$ (see \cite{Spivak} for example).

On the other hand, totally umbilic hypersurfaces in $\mathbb{H}^n$ include not only hyperspheres but also hypersurfaces called spheres and horospheres, which arise as the intersections of $\mathbb{H}^n$ with spacelike or lightlike hyperplanes in $\mathbb{L}^{n+1}$, respectively.
In the upper half space model of $\mathbb{H}^n$, which is realized as
\[
	\mathbb{R}^n_+=\left \{(x_1,x_2, \cdots, x_n)\in \mathbb{R}^{n-1}\times \mathbb{R}^+\colon \frac{dx_1^2+dx_2^2+ \cdots + dx_{n}^2}{x_n^2}\right\},
\]
horospheres can be written as $x_{n+1}=t_0$ for some constant $t_0>0$ under a suitable isometry. Hence, Do Carmo--Lawson \cite{doCL} and Koh \cite{Koh} also considered the Bernstein-type problem for graphs over a horosphere of the form $x_{n+1}=f(x_1,x_2,\cdots,x_n)$, and proved that any entire constant mean curvature (CMC) graph defined over the whole $\mathbb{R}^n$ must be a horosphere, that is, $f$ must be a constant function. Furthermore, it is shown in \cite{Koh} that if the orthogonal projection from a non-zero CMC hypersurface into a horosphere is not surjective and its image is simply connected, then hypersurface is a hypersphere.

In this article, we investigate Bernstein-type Theorems for CMC surfaces in the three-dimentional light cone $\qthreep$.
Interestingly, $\qthreep$ minus a lightlike line can be represented as
$$
	\left\{ (u,v,t) \in \mathbb{R}\times\mathbb{R}\times \mathbb{R}^+ : ds^2= \frac{du^2+dv^2}{t^2}\right\},
$$
which we call the upper half space model of $\qthreep$ \cite{Cho}.
As in $\hthree$ \cite{doCL}, a sphere, a horosphere, and a hypersphere in $\qthreep$ are defined as the intersection of $\qthreep$ with a spacelike hyperplane, a lightlike hyperplane, and a timelike hyperplane in $\lfour$, respectively.
See Definition~\ref{Def:202601140823PM} for details.
In the upper half space model, the horospheres of $\qthreep$ are again represented, up to isometry, in the form $t=t_0$ for some constant $t_0>0$. However, unlike the case of hyperbolic space, the horospheres in $\qthreep$ are not only totally umbilic but are also characterized as totally geodesic surfaces, as shown in Proposition \ref{Prop:202601150810PM}. Therefore, we may consider graphs over such a horosphere as a Bernstein-type problem in $\qthreep$.

Bernstein-type Theorems in a degenerate metric space such as $\qthreep$ may require additional assumptions other than being  entire and minimal. For example, as was pointed out in \cite{ALY1}, there are many entire zero mean curvature (ZMC) surfaces in the isotropic three space $\ithree$. An appropriate condition is a lower bound for the Gaussian curvature $K$, and we obtain the following:

\begin{maintheorem*}\label{thm:Berntypeq3+}
	\begin{itemize}
		\item[(1)] Any entire ZMC graph in $\mathbb{Q}^3_+$ has Gaussian curvature bounded below
		      if and only if it is the image of $t=t_0$ for some positive constant $t_0$.
		      It is a horosphere of $\qthreep$.

		\item[(2)] Any entire CMC $ \h <0$  graph in $\mathbb{Q}^3_+$ has Gaussian curvature bounded below
		      if and only if it is congruent to the image of
		      $
			      t = \sqrt{\frac{-\h}{2} } (u^2 + v^2 + 1).
		      $
		      It is a punctured sphere of $\qthreep$.
		\item[(3)]  In $\mathbb{Q}^3_+$, there exists no entire graph with positive CMC.
	\end{itemize}
\end{maintheorem*}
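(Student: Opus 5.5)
We work throughout in the upper half space model. An entire graph $t=f(u,v)$ with $f>0$ on $\mathbb{R}^2$ has induced metric $ds^2=f^{-2}(du^2+dv^2)$. This is conformal to the flat metric with conformal factor $e^{2\omega}$, $\omega=-\log f$. Hence
\[
K=-e^{-2\omega}\Delta\omega=f^2\Delta\log f=f\Delta f-|\nabla f|^2 .
\]
The first step is to express the mean curvature of the graph in terms of $f$, using the normal and second fundamental form of $\qthreep$ from the earlier sections, as a second order expression $\h=\mathcal{H}[f]$. As a normalization check, the formula must give $\h\equiv 0$ for $f\equiv t_0$. It must also give the constant $\h$ for $f=\sqrt{-\h/2}\,(u^2+v^2+1)$, for which one computes $f\Delta f-|\nabla f|^2=-2\h$. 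I expect the CMC equation to decompose as $K$ plus a term in the Hessian or trace-free part of $f$. The Gauss equation in the light cone then relates that term to $K+2\h$.

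Next, I will reduce everything to a Liouville-type statement on $\mathbb{R}^2$. The ZMC or CMC equation is elliptic in $\log f$ or in $1/f$. The plan is to rewrite it, together with the hypothesis $K\ge -c$, as a differential inequality for a suitable auxiliary function. Natural candidates are $w=1/f$, or the conformal invariant $f^2\lvert\partial_z^2\log f\rvert^2$ (the Hopf-type/Schwarzian quantity measuring the deviation from umbilicity). The target is an inequality of the form $\Delta \phi\ge 0$ with $\phi$ bounded above on the whole plane, or $\Delta\phi\le 0$ with $\phi\ge 0$. The classical Liouville theorem for sub/superharmonic functions on $\mathbb{R}^2$ then forces $\phi$ to be constant. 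Here the positivity $f>0$ on all of $\mathbb{R}^2$, i.e.\ the graph being entire, is used exactly as in the isotropic case of \cite{ALY1}. There, entire ZMC graphs without a curvature bound are abundant, so the curvature hypothesis is indispensable at this step.

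Once $\phi$ is constant, the surface is totally umbilic. It is then one of the surfaces of Definition~\ref{Def:202601140823PM}, cut out by a hyperplane of $\lfour$. I will then read off the three cases from the sign of $\h$. In case (1), umbilic with $\h=0$ means a horosphere, i.e.\ $f\equiv t_0$ after an isometry, by Proposition~\ref{Prop:202601150810PM}. In case (2), $\h<0$ forces the spacelike-hyperplane case. A normalization by the isometry group of $\qthreep$ (translations, rotations and dilations in $(u,v)$ together with the inversion-type maps) brings it to $t=\sqrt{-\h/2}(u^2+v^2+1)$. In case (3), $\h>0$ would correspond to a timelike hyperplane. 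Such a hypersphere is not a graph over the whole $(u,v)$-plane, since $t$ must vanish or blow up somewhere, contradicting entireness. For (3) I would also try to give a direct maximum-principle argument that avoids the curvature hypothesis entirely, since the statement makes no assumption on $K$. For instance, one could compare $f$ with the model spheres on large discs and show that $\h>0$ forces $f$ to reach $0$ or $\infty$ at finite radius. The converse directions in (1) and (2) are direct computations: the two model surfaces have constant $K$, namely $0$ and $-2\h$.

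The main obstacle is the second step: finding the right auxiliary function $\phi$ and proving that the curvature lower bound gives the one-sided bound needed for Liouville. I would first try $\phi=\log f$ in case (1). There $K\ge -c$ reads $\Delta\log f\ge -c/f^2$, and combined with the ZMC equation this should yield harmonicity of $\log f$ together with a growth bound, forcing $\log f$ to be constant. In case (2) I would first try $\phi=\log\bigl(f/(\,|z|^2+1)\bigr)$ after the normalization. If the sign-based argument fails, the fallback is the Omori--Yau maximum principle on the complete metric $ds^2$: completeness would follow from entireness plus $K$ bounded below, and Omori--Yau would then be applied to $\lvert\text{trace-free second fundamental form}\rvert^2$.
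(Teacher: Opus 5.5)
There is a genuine gap, and it starts with the meaning of $K$. You take $K$ to be the intrinsic curvature $-e^{-2\omega}\Delta\omega=f\Delta f-|\nabla f|^2$ of $ds^2=f^{-2}(du^2+dv^2)$. In $\qthreep$, however, the Gauss equation $2\omega_{z\bar z}=He^{2\omega}$ says precisely that this intrinsic curvature equals $-2H$. Your own value $-2H$ for the model sphere is an instance of this, and the ``relation to $K+2H$'' you anticipate is just the identity $K+2H\equiv 0$. So on a CMC graph your hypothesis $K\ge -c$ holds automatically, and under your reading (1) and (2) are false. For example, $t=e^u$ is an entire ZMC (flat) graph with $Q=-\tau_{zz}/\tau=-1/4\neq0$. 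Likewise, $t=\sqrt{-2H}\cosh u$ from Remark~\ref{Rem:202601180320PM} is an entire CMC $H<0$ graph that is not a sphere. No choice of auxiliary $\phi$ can rescue this. In case (1), for instance, $\Delta\log f\ge -c/f^2$ merely restates $\Delta\log f=0$ and gives no growth control. The $K$ of the theorem is the extrinsic curvature $K=H^2-4e^{-4\omega}|Q|^2$, i.e.\ $H^2-K=4f^2|f_{zz}|^2$; the umbilicity quantity is $f^2|f_{zz}|^2$, not $f^2|\partial_z^2\log f|^2$. For the two models this $K$ equals $0$ and $H^2$, not $0$ and $-2H$, and a lower bound on it is a bound on $f^2|f_{zz}|^2$.

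Even with the correct $K$, you are missing the key step: a holomorphic representation of $f$ that turns this bound into a bounded entire function.
\begin{itemize}
\item For $H=0$, $\omega=-\log f$ is harmonic, so $e^\omega=|g_z|$ with $g$ entire, $g_z\neq0$, $2Q=S(g)$, and $K=-|S(g)/g_z^2|^2$.
\item For $H<0$, Liouville's equation gives $e^\omega=\sqrt{-2/H}\,|g_z|/(1+|g|^2)$ with $g$ meromorphic on $\mathbb{C}$, and $H^2-K=(1+|g|^2)^4|S(g)/(cg_z^2)|^2\ge|S(g)/(cg_z^2)|^2$.
\end{itemize}
In both cases $S(g)/g_z^2$ is entire and bounded, hence constant. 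You must still show the constant is $0$, which your ``once $\phi$ is constant the surface is umbilic'' skips. For $H=0$, $M=1/g_z$ satisfies $-MM_{zz}+\tfrac12M_z^2=\mathrm{const}$, so $M_{zzz}=0$, and a zero-free quadratic polynomial is constant. For $H<0$, a pole of $g$ forces the constant to vanish; if there is no pole, argue as for $H=0$. Then $g$ is M\"obius and the graph is the sphere.

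Your Omori--Yau fallback also fails. Completeness does not follow from entireness and $K\ge -c$: the model in (2) is a punctured round sphere, which is incomplete.

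Finally, (3) needs neither the curvature hypothesis nor a comparison argument. For $H>0$, Liouville's equation gives $g$ with $|g|<1$ everywhere or $|g|>1$ everywhere. Then $g$ or $1/g$ is a bounded entire function, hence constant, contradicting $g_z\neq0$. Your intrinsic observation actually yields this directly: a conformal metric on all of $\mathbb{C}$ with constant curvature $-2H<0$ is excluded by the Ahlfors--Schwarz lemma.
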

Interestingly,  unlike the case of $\mathbb{H}^3$, a hypersphere in $\qthreep$ does not provide a solution to the Bernstein problem. \cite{doCL} and \cite{Koh} do not assume curvature bounds, but we cannot relax the condition that $K$ is bounded below. See Remark~\ref{Rem:202601180320PM}.

We can also obtain a characterization of spheres as graphs over the entire ideal boundary of $\qthreep$. See Figure~\ref{Fig:20260937PM}.
Given $p \in \qthreep \subset \lfour$, let $[p]$ be the lightlike ray emanating from the origin of $\lfour$ and going through $p$.
The ideal boundary $\partial^\infty\qthreep$ of $\qthreep$ can be identified with the set of all such rays.
Let $\pi$ denote the natural projection
$$
	\pi : \qthreep \to \partial^\infty\qthreep, \qquad
	p \mapsto [p].
$$
Using $\pi$, we may consider graphs over a domain in the ideal boundary.
In particular, we consider graphs over a punctured ideal boundary, by which we mean $\partial^\infty\qthreep\setminus\{[p_0]\}$ for some $p_0 \in \qthreep$.

Any punctured ideal boundary with the puncture $[p]$ can be identified via $\pi$ with a family of horospheres touching the ideal boundary at $[p]$. So the results of the Main Theorem can be rephrased as a Bernstein-type theorem for graphs over a punctured ideal boundary of $\qthreep$.

\begin{corollary}\label{Coro:202603071137AM}
	Let $S$ be a spacelike CMC $H$ surface in $\qthreep$ such that $\pi|_S$ is a one-to-one correspondence between $S$ and the entire ideal boundary. Then, $H<0$ and $S$ is a sphere.
\end{corollary}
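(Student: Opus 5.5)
The plan is to reduce to the Main Theorem by removing one point of $S$ and viewing the rest as an entire graph in the upper half space model, with the puncture chosen at the image of that point.

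\emph{Step 1: topology of $S$.} Since $S$ is spacelike and the ray direction $p$ at each $p\in\qthreep$ is lightlike, $p$ is never tangent to $S$. Hence $d(\pi|_S)$ is injective at every point, and $\pi|_S$ is a local diffeomorphism onto the $2$-dimensional ideal boundary $\partial^\infty\qthreep\cong S^2$. Being by hypothesis a bijection, $\pi|_S$ is a diffeomorphism, so $S$ is compact and homeomorphic to $S^2$. In particular the Gaussian curvature $K$ of $S$ is continuous on a compact surface and therefore bounded.

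\emph{Step 2: reduction to an entire graph.} Fix any $q\in S$ and set $[p_0]=\pi(q)$. Apply an isometry of $\qthreep$ so that the lightlike line removed in the upper half space model is the ray $[p_0]$. As explained before the corollary, the punctured ideal boundary $\partial^\infty\qthreep\setminus\{[p_0]\}$ is identified via $\pi$ with the $(u,v)$-plane, and the horospheres tangent at $[p_0]$ are the level sets $t=t_0$. Then $S\setminus\{q\}$ lies in the model, and $\pi|_{S\setminus\{q\}}$ is a bijection onto $\mathbb{R}^2$ in the coordinates $(u,v)$. Thus $S\setminus\{q\}$ is an entire graph $t=f(u,v)$, smooth because the projection is a diffeomorphism. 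It has constant mean curvature $H$, and by Step 1 its Gaussian curvature is bounded below.

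\emph{Step 3: case analysis via the Main Theorem.} If $H>0$, this contradicts part (3). If $H=0$, part (1) gives that $S\setminus\{q\}$ is the horosphere $t=t_0$. But the closure of this horosphere in $\qthreep$ is itself, so $S=S\setminus\{q\}$, which is impossible since $q\in S$. Equivalently, $S$ would be a horosphere, whose image under $\pi$ omits $[p_0]$, contradicting surjectivity. Hence $H<0$, and part (2) gives that $S\setminus\{q\}$ is congruent to the punctured sphere $t=\sqrt{-H/2}\,(u^2+v^2+1)$. Since $S$ is closed and contains this punctured sphere as a dense subset, $S$ coincides with the full sphere, namely the intersection of $\qthreep$ with the corresponding spacelike hyperplane.

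\emph{Main obstacle.} The delicate point is Step 2: we need to check carefully that the identification of the punctured ideal boundary with the $(u,v)$-plane is exactly the graph structure used in the Main Theorem. That is, the fibers of $\pi$ must be the vertical lines $(u,v)=\text{const}$ in the upper half space model, so that a bijective projection really gives an entire graph. I would verify this from the explicit parametrization of the upper half space model of $\qthreep$ \cite{Cho}, in which the ray through a point depends only on $(u,v)$ and $t$ scales along the ray.
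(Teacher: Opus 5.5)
Your proposal is correct and follows essentially the same route as the paper. Compactness of $S$ gives a lower bound on $K$. Removing one point, with the puncture of the half space model placed at its ray, produces an entire CMC graph to which the Main Theorem applies, and the horosphere case is excluded because a horosphere misses a point of the ideal boundary. You also supply several details the paper leaves implicit, all of them sound. These are the local-diffeomorphism argument ($S$ is spacelike, so it is never tangent to the lightlike rays) showing that $S\cong S^2$ is compact, the explicit use of part (3) to exclude $H>0$, and the closure argument that recovers the full sphere from the punctured one.
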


\begin{figure}[t]
\begin{center}
\psfragfig[width=0.35\textwidth]{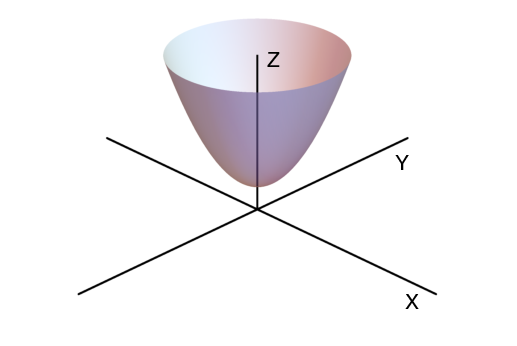}{
    \psfrag{X}{$u$}
    \psfrag{Y}{$v$}
    \psfrag{Z}{$t$}
    \psfrag{U}{$\tilde{u}$}
    \psfrag{V}{$\tilde{v}$}
    \psfrag{W}{$\tilde{w}$}
  }%
  \hspace{1cm}
\psfragfig[width=0.35\textwidth]{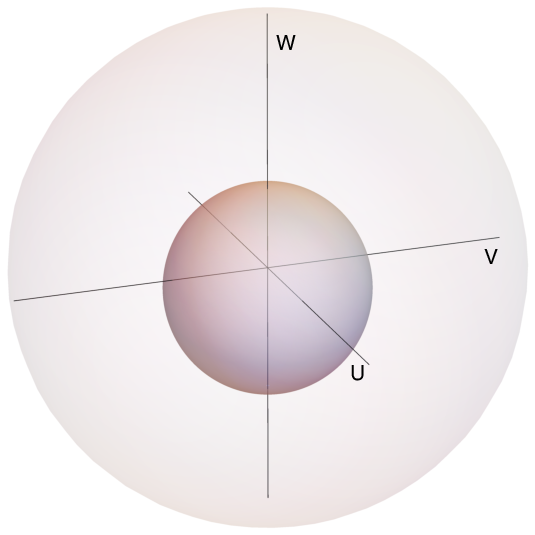}{
    \psfrag{X}{$u$}
    \psfrag{Y}{$v$}
    \psfrag{Z}{$t$}
    \psfrag{U}{$\tilde{u}$}
    \psfrag{V}{$\tilde{v}$}
    \psfrag{W}{$\tilde{w}$}
  }
	\caption{ A sphere in the half space model (left)
and in the punctured ball model (right) of $\qthreep$.}
\label{Fig:20260937PM}
\end{center}
\end{figure}

\section*{Acknowledgements}

We would like to thank Joseph Cho for sharing his insights. The first author was supported by JSPS KAKENHI Grant Number 23K12979. The second and third authors were supported by the NRF of Korea funded by MSIT (Korea-Austria Scientific and Technological Cooperation RS-2025-1435299, P.I.: Joseph Cho).

\section{Preliminaries}\label{Sec.2}

In this section, we will briefly review the basic geometry of the three-dimensional light cone $\qthreep$ and its surface
theory.

\subsection{Hermitian model of $\qthreep$}

Let $\lfour$ denote the four-dimensional Lorentzian space,  whose inner product is
\[
	\langle (x_0, x_1, x_2, x_3), (y_0, y_1, y_2, y_3) \rangle := - x_0 y_0 + x_1 y_1 + x_2 y_2 + x_3 y_3.
\]
We identify $\mathbb{L}^4$ with $\hermtwo$ via the map :
\[
	\lfour \ni (x_0, x_1, x_2, x_3) \sim \begin{pmatrix} x_0 + x_3 & x_1 + i x_2 \\ x_1 - i x_2 & x_0 - x_3 \end{pmatrix} = X \in \hermtwo.
\]
Then the inner product of $X, Y$ can be written as follows:
\[
	\langle X, Y \rangle = -\frac{1}{2} \left(\det{(X+Y)} - \det{X} - \det{Y} \right).
\]
The three-dimensional light cones are
\begin{align*}
	\qthreep & := \{ X \in \hermtwo | \langle X, X \rangle = 0, \tr{X} > 0 \}, \\
	\qthreem & := \{ X \in \hermtwo | \langle X, X \rangle = 0, \tr{X} < 0 \}.
\end{align*}

For any $A \in \sltc$, the following map
$$
	\varphi_A: \hermtwo \ni X \mapsto A X A^* \in \hermtwo,
$$
where $A^*$ is the conjugate transpose of $A$, is an orientation preserving isometry of $\lfour$ which
preserves $\qthreep$.
We let $\varphi_-(x_0,x_1,x_2,x_3) := (x_0,x_1,x_2,-x_3)$, and let
$$
	\operatorname{Isom}(\qthreep) := \operatorname{span} \langle (\varphi_A)|_{\qthreep}, \varphi_- \rangle_{A \in \sltc}
$$
be the set of all isometries of $\qthreep$, where $(\varphi_A)|_{\qthreep}$ is the restriction of $\varphi_A$ to $\qthreep$.
We say that two objects $S_1$, $S_2$ in $\qthreep$ are congruent to each other if $S_2= \varphi(S_1)$ for some $\varphi \in \operatorname{Isom}(\qthreep)$.

\subsection{Punctured ball model of $\qthreep$}
Using the stereographic projection
$$
	\Pi : \qthreep \to \mathbb{R}^3,  \quad
	\begin{pmatrix} x_0 + x_3 & x_1 + i x_2 \\ x_1 - i x_2 & x_0 - x_3 \end{pmatrix} \mapsto
	(\tilde{u},\tilde{v},\tilde{w}) = \left( \frac{x_1}{1+x_0}, \frac{x_2}{1+x_0}, \frac{x_3}{1+x_0} \right)
$$
from $(x_0,x_1,x_2,x_3)=(-1,0,0,0)$, we can ideitify $\qthreep$, $\partial^\infty\qthreep$, and the vertex of $\qthreep$ with
\begin{align*}
	B & := \{ (\tilde{u},\tilde{v},\tilde{w}) \in \mathbb{R}^3 : 0 < \tilde{u}^2+\tilde{v}^2+\tilde{w}^2< 1\}, \\
	S & := \{ (\tilde{u},\tilde{v},\tilde{w}) \in \mathbb{R}^3 : \tilde{u}^2+\tilde{v}^2+\tilde{w}^2 = 1\},
\end{align*}
and the origin $(0,0,0)$, respectively.
For the ideal boundary $\partial^\infty\qthreep$, see the Introduction.
We call $B$ a {\it punctured ball model} of $\qthreep$ (\textit{cf}. \cite{CLLY}).

\subsection{Half space model of $\qthreep$ and graphs over a horosphere}

For convenience, we introduce
$$
	\qthreept := \qthreep \setminus \{ (x_0,x_1,x_2,x_3) : x_0-x_3 = x_1 = x_2 = 0\}.
$$

Then
\begin{equation}\label{Eq:202601170935PM}
	\Phi: \mathbb{R}\times \mathbb{R}\times\mathbb{R}^+ \ni (u,v,t) \mapsto \frac{1}{t} \begin{pmatrix} u^2+v^2 & u+iv \\ u-iv & 1 \end{pmatrix} \in \qthreept
\end{equation}
is a one-to-one correspondence and the pull back of the metric by $\Phi$ is
\begin{equation}\label{202602210753AM}
	ds^2 = \frac{du^2+dv^2}{t^2}.
\end{equation}

By abusing terminology, we call $\mathbb{R}^2 \times \mathbb{R}^+$ equipped with this metric \textit{the half space model} of $\qthreep$. To the authors' knowledge, this model was first noticed by Joseph Cho \cite{Cho}.

Note that
\begin{gather*}
	\Pi \circ \Phi[\mathbb{R}^2\times\mathbb{R}^+] = B\setminus\{(0,0,z):  0 < z < 1 \} \cong \qthreept, \\
	\Pi \circ \Phi[\mathbb{R}^2\times\{0\}] =S\setminus\{(0,0,1)\} \cong \partial^\infty\qthreep\setminus\{[(1,0,0,1)]\}.
\end{gather*}

\begin{figure}[t]
\begin{center}
\psfragfig[width=0.9\linewidth]{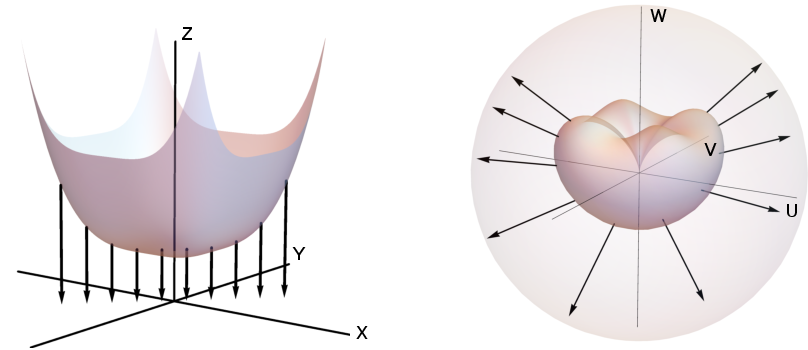}{
    \psfrag{X}{$u$}
    \psfrag{Y}{$v$}
    \psfrag{Z}{$t$}
    \psfrag{U}{$\tilde{u}$}
    \psfrag{V}{$\tilde{v}$}
    \psfrag{W}{$\tilde{w}$}
  }
\caption{The projection $\pi$ and a graph in the half space  model (left) and in the punctured ball model (right).}
\label{Fig:202603161139}
\end{center}
\end{figure}

We define the notion of graphs as follows. See also Figure~\ref{Fig:202603161139}.
\begin{definition}
	Given a function $t = \tau(u,v)$ on $\mathcal{U} \subset \mathbb{R}^2$, we call the image of
	\begin{equation}\label{Eq:202505110555AM}
		X(u,v) := \Phi(u,v, \tau(u,v))
		= \frac{1}{\tau(u,v)}\begin{pmatrix} u^2+v^2 & u+iv \\ u-iv & 1 \end{pmatrix},
		\qquad (u,v) \in \mathcal{U}
	\end{equation}
	the \textit{graph} of $\tau$ over $\mathcal{V} := \{ \Phi(u,v,1): (u,v) \in \mathcal{U}\}$ (which is a part of the standard horosphere. See Definition~\ref{Def:202601140823PM}) in $\qthreep$. If $\mathcal{U}= \mathbb{R}^2$ we call it \textit{entire}.
\end{definition}

Strictly speaking, the surface given by \eqref{Eq:202505110555AM} is a graph in $\qthreept$. However, we remark that any conformal spacelike immersion into $\qthreep$ is locally written as in \eqref{Eq:202505110555AM} after an isometry of $\qthreep$, see \cite{Liu2} for example.

\subsection{Curvatures of graphs in $\qthreep$}
For computational purposes, we use the Hermitian model of $\qthreep$.

Given a spacelike immersion $X: \mcU \subset \mathbb{R}^2 \to \qthreep$ with coordinates $(u,v) = (u^1, u^2) \in \mathcal{U}$, there is a unique map $\G : \mcU \to \qthreem$ which satisfies
\begin{align}\label{Eq:202507142137}
	\langle \G, \G \rangle = \langle \G, X_u \rangle = \langle \G, X_v \rangle = 0,\qquad \langle \G, X \rangle = 1,
\end{align}
called the \emph{lightlike Gauss map} of $X$ (\textit{cf}. \cite{IPR2004}, \cite{Liu2}).
The first and  the  second fundamental forms of $X$ are given by
\[
	\mathbf{g} :=   g_{ij} \operatorname{d}\!u^i \operatorname{d}\!u^j = \langle X_i, X_j \rangle \operatorname{d}\!u^i \operatorname{d}\!u^j , \quad
	\mathbf{A} :=  A_{ij} \operatorname{d}\!u^i \operatorname{d}\!u^j =\langle \G, X_{ij} \rangle \operatorname{d}\!u^i \operatorname{d}\!u^j,
\]
respectively.

Given a spacelike immersion $X$, suppose that $u,v$ are conformal parameters, i.e.
\begin{equation}\label{eq:fff}	\mathbf{g}=e^{2{\omega}}(\operatorname{d}\!u^2+\operatorname{d}\!v^2)=e^{2{\omega}}\operatorname{d}					\!z\operatorname{d}\!\bar{z}         \qquad        z:=u+iv
\end{equation}
for some function $\omega \colon \mathcal{U} \subset \mathbb{R}^2  \to \mathbb{R}$. Then
\[
	H =2e^{-2 \omega } \langle \G, X_{z\bar{z}} \rangle, \qquad
	Q\operatorname{d}\!z^2 = \langle \G, X_{zz} \rangle\divz^2, \qquad
	K=H^2-4Q\bar{Q}e^{-4{\omega}}
\]
for the mean curvature, the Hopf differential and the Gaussian curvature of $X$, respectively.
The Gauss-Weingarten equations (\textit{cf}. \cite{Liu1}) are
\begin{equation}\label{Eq:202601281208PM}
	\begin{cases}
		X_{zz} = Q X + 2 \omega_z X_z,                                      \\
		X_{z\bar{z}} = \frac{1}{2}He^{2\omega}X - \frac{1}{2}e^{2\omega}\G, \\
		\G_z = -HX_z - 2Qe^{-2 \omega}X_{\bar{z}}
	\end{cases}
\end{equation}
and the Gauss-Codazzi equations are
\[\begin{cases}
		2\omega_{z\bar{z}}=He^{2 \omega}, & \text{(Gauss equation)}   \\
		H_z = 2 Q_{\bar{z}} e^{-2\omega}. & \text{(Codazzi equation)}
	\end{cases}\]

If  $H$ is constant, we call the surface a \emph{constant mean curvature (CMC) surface}.
If  $H \equiv 0$, we call it a \emph{zero mean curvature (ZMC) surface}.
So a ZMC surface is a CMC surface.

By the Gauss equation, $H$ is an intrinsic invariant and the Codazzi equation implies that $H$ is a constant if and only if $Q$ is holomorphic. In particular, if $X$ has ZMC, then the metric $\mathbf{g}$ is a flat metric.

Now we consider the graph of $t=\tau(u,v)$, i.e. the image of \eqref{Eq:202505110555AM}.
Direct computations show that
\begin{gather}
	\label{Eq:202601180922AM}
	(\mathbf{g}_{ij}) = \begin{pmatrix} 1/\tau^2 & 0 \\ 0 & 1/\tau^2 \end{pmatrix}, \qquad
	(\mathbf{A}_{ij}) = \frac{\tau_u^2+\tau_v^2}{2\tau^2 } \begin{pmatrix} 1 & 0 \\ 0 & 1 \end{pmatrix}
	-\frac{1}{\tau} \begin{pmatrix} \tau_{uu} & \tau_{uv} \\ \tau_{vu} & \tau_{vv} \end{pmatrix},
	\\
	\label{Eq:202601180923AM}
	Q = - \frac{\tau_{zz} }{\tau}, \qquad
	H = - 2 \tau^2 (\ln \tau)_{z\bar{z}}, \qquad
	H^2-K = 4 \tau^2 |\tau_{zz}|^2 \ge 0.
\end{gather}

\section{Spheres, horospheres, and hyperspheres in $\qthreep$}\label{Spheres}

\subsection{Intersection of $\qthreep$ with hyperplanes in $\mathbb{L}^4$}\label{SubSec:202601151013PM}
Recall that  spheres, horospheres, and hyperspheres in the hyperbolic three-space $\hthree$ as the hyperboloid in $\lfour$  are obtained as the intersection of $\hthree$ with spacelike hyperplanes, lightlike hyperbolic planes, and timelike hyperplanes, respectively.

In this  section, we consider the intersection of $\qthreep$ with hyperplanes of $\lfour$ and investigate their properties.

\begin{definition}\label{Def:202601140823PM}
	We call the intersection of a hyperplane of $\lfour$ with $\qthreep$ a \textit{sphere}, a \textit{horosphere}, or a \textit{hypersphere}
	if the hyperplane is spacelike, lightlike, or timelike, respectively, and the intersection is a regular surface.
	We call the intersection the standard horosphere if the hyperplane satisfies $x_0-x_3=1$.
\end{definition}

Direct calculations show that all of them are totally umbilic. In the next Proposition, we show that the converse is also true, following a standard argument (\textit{cf}. \cite[Ch~7, Theorem~29]{Spivak} for example).
\begin{proposition}\label{Prop:202601150810PM}
	Any connected totally umbilic surface in $\qthreep$ is a (part of a) sphere, horosphere, or a hypersphere.
	In particular, any connected totally geodesic surface in $\qthreep$ is a (part of a) horosphere.
\end{proposition}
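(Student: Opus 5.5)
Our approach is the classical one: we show that totally umbilic means the lightlike Gauss map is constant along the surface up to a multiple of $X$, and then read off a fixed hyperplane.

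Let $X\colon\mcU\to\qthreep$ be a connected totally umbilic spacelike surface, and take conformal coordinates $z=u+iv$ locally, so that \eqref{eq:fff} holds. Totally umbilic means that $\mathbf{A}$ is proportional to $\mathbf{g}$, i.e. $Q\equiv 0$. The Codazzi equation $H_z=2Q_{\bar z}e^{-2\omega}$ then gives $H_z=0$. Since $H$ is real, $H$ is constant on each coordinate patch, and hence on all of the connected surface. The Gauss--Weingarten equations \eqref{Eq:202601281208PM} with $Q=0$ reduce to $\G_z=-HX_z$. Since $\G$ is real-valued (Hermitian-valued) and $H$ is a real constant, taking conjugates gives $\G_{\bar z}=-HX_{\bar z}$. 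Hence
\[
d(\G+HX)=0,
\]
so $N:=\G+HX$ is a constant vector in $\lfour$, again by connectedness.

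Next we use \eqref{Eq:202507142137} together with $\langle X,X\rangle=0$:
\[
\langle N,X\rangle=\langle \G,X\rangle+H\langle X,X\rangle=1 .
\]
Thus $X$ lies in the affine hyperplane $\{Y:\langle N,Y\rangle=1\}$. Its causal character is determined by
\[
\langle N,N\rangle=\langle\G,\G\rangle+2H\langle\G,X\rangle+H^2\langle X,X\rangle=2H .
\]
The hyperplane, whose normal is $N$, is therefore spacelike if $H<0$ (timelike normal), lightlike if $H=0$, and timelike if $H>0$. In addition, $N\neq0$, because $\langle N,X\rangle=1$.

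The surface is two-dimensional and open inside the intersection of $\qthreep$ with this hyperplane. That intersection is itself a two-dimensional set, which is regular near the surface because $X$ is an immersion into it. So $X$ is a part of a sphere, a horosphere, or a hypersphere in the sense of Definition~\ref{Def:202601140823PM}.

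For the second claim, totally geodesic means $\mathbf{A}\equiv0$. Then $Q=0$, and also $H=2e^{-2\omega}\langle\G,X_{z\bar z}\rangle=0$. The argument above therefore applies with $H=0$, giving $\langle N,N\rangle=0$, a lightlike hyperplane, i.e. a horosphere. Conversely, the computation $\langle N,N\rangle=2H$ shows that only the horospheres can be totally geodesic among these surfaces, consistent with the direct check.

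The main obstacle I expect is the bookkeeping needed to justify that $\G+HX$ is globally constant. This requires patching conformal charts on a connected surface. The quantities $\G$ and $H$ are independent of the chosen chart, since $\G$ is defined by \eqref{Eq:202507142137} and $H$ by the trace of $\mathbf{A}$ relative to $\mathbf{g}$. Their local constancy on each chart then propagates to global constancy by connectedness.
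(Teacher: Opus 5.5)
Your proof is correct and follows the paper's argument: Codazzi with $Q=0$ is the same integrability condition the paper extracts from $G_{uv}=G_{vu}$. Your constant vector $N=G+HX$ is exactly the paper's $H\vec d$ when $H\neq 0$ and $G_0$ when $H=0$, so you simply merge its two cases into one computation $\langle N,X\rangle=1$, $\langle N,N\rangle=2H$. One small fix: regularity of the intersection does not follow from $X$ immersing into it, but it holds anyway, because $\langle N,\cdot\rangle$ restricted to $T_Y\qthreep=Y^\perp$ could only vanish if $N\parallel Y$, which contradicts $\langle N,Y\rangle=1$.
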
\label{Prop:202601150642AM}

\begin{proof}
	Suppose $X$ is totally umbilic. Then, it follows from \eqref{Eq:202601281208PM} that for any $(u,v)$
	$$
		G_u(u,v) = -H(u,v) X_u(u,v), \qquad
		G_v(u,v) = -H(u,v) X_v(u,v)
	$$
	for the mean curvature function $H(u,v) $.
	From $G_{uv} = G_{vu}$, we see that $H_u=H_v=0$ for all $(u,v)$, so $H$ is constant, say $\h$.

	Suppose that $\h = 0$. Then $G$ is constant, say, $G(u,v)=G_0$. Then, from \eqref{Eq:202507142137}, we see that $\langle X, G_0 \rangle = 1$,
	hence $X$ lies in a lightlike hyperplane.

	Suppose that $\h \not= 0$. Then
	$
		G(u,v) = -\h ( X(u,v) - \vec{d} )
	$
	for some constant vector $\vec{d}$.
	Hence
	$$
		0 = \langle G, G \rangle = \h^2 \langle X-\vec{d}, X-\vec{d} \rangle = \h^2 ( \langle X, X \rangle - 2 \langle X, \vec{d} \rangle + \langle \vec{d},\vec{d} \rangle).
	$$
	Therefore,
	$
		\langle X, \vec{d} \rangle = \tfrac{1}{2} \langle \vec{d}, \vec{d} \rangle.
	$
	So $X$ lies in a hyperplane.
	Note that $\vec{d} = X + \tfrac{1}{\h} G$ hence
	$
		\langle \vec{d}, \vec{d} \rangle = 2/\h.
	$
	Therefore, if $\h<0$, then $\vec{d}$ is timelike and the hyperplane is spacelike.
	If $\h>0$, then $\vec{d}$ is spacelike and the hyperplane is timelike. Now the conclusion follows.
\end{proof}

For an arbitrary constant $t>0$, consider the hyperplane
\begin{equation}\label{Eq:202601260727PM}
	\Pi_t : x_0 - x_3 = 1/t.
\end{equation}
The following map
$$
	\begin{pmatrix} \sqrt{t} & 0 \\ 0 & 1/\sqrt{t} \end{pmatrix}
	\begin{pmatrix} u^2 + v^2 & u + i v \\ u - i v & 1 \end{pmatrix}
	\begin{pmatrix} \sqrt{t} & 0 \\ 0 & 1/\sqrt{t} \end{pmatrix}^*
	= \frac{1}{t} \begin{pmatrix} \tilde{u}^2 + \tilde{v}^2 & \tilde{u} + i \tilde{v} \\ \tilde{u}  - i \tilde{v}  & 1 \end{pmatrix}
$$
where $\tilde{u} := t u$, $\tilde{v} := t v$ shows that $\Pi_t$ is isometric to $\Pi_1$ by an origin fixing isometry of $\mathbb{L}^4$.
Using this observation, we can easily conclude that all horospheres in $\qthreep$ are congruent to each other.

\begin{remark}
	Note that any horosphere in $\mathbb{H}^3$ is also obtained as the intersection of $\mathbb{H}^3 \subset \mathbb{L}^4$ and a lightlike plane.
	However, horospheres in $\mathbb{H}^3$ are totally umbilic but not totally geodesic, while horospheres in $\qthreep$ are totally geodesic.
\end{remark}

\subsection{Graph functions of the hyperplane sections}
An arbitrary hyperplane $\Pi$ in $\mathbb{L}^4$ has an equation of the following form:
\begin{align}\label{Eq:202506180738}
	a (x_0+x_3) + b(x_0-x_3) + cx_1 + d x_2 =  k
\end{align}
for $a,b,c,d,k\in\mathbb{R}$. Consider $\Pi \cap \qthreep$. If $k=0$ in \eqref{Eq:202506180738} then $\Pi$ passes through the origin, and $\Pi \cap \mathbb{Q}^3_+$ is not spacelike. Since we are interested in spacelike surfaces, we  may  assume without loss of generality that $k = 1$ such that
the equation of $\Pi$ is
\begin{align}\label{Eq:202507191432}
	a (x_0+x_3) + b(x_0-x_3) + cx_1 + d x_2 =  1.
\end{align}
Then  $\vec{n} := (-(a+b), c, d , a-b)^t$ is normal to  the hyperplane $\Pi$ given by \eqref{Eq:202507191432}, and satisfies
\[
	\langle \vec{n}, \vec{n}  \rangle  = - 4 ab + c^2 + d^2.
\]
So $\Pi$ is spacelike, lightlike, or timelike if and only if $- 4 ab + c^2 + d^2 <, =$, or $>0$, respectively.

In visualizing $\Pi \cap \qthreept$, it is better to consider the half space model of $\qthreept$.
Since an arbitrary point of $\qthreept$ can be written as follows
\begin{align}\label{202506180754PM}
	\begin{pmatrix} x_0+x_3 & x_1+ix_2 \\ x_1-i x_2 & x_0-x_3 \end{pmatrix}
	= \frac{1}{t} \begin{pmatrix} u^2 + v^2 & u + i v \\ u - i v & 1 \end{pmatrix},
\end{align}
an  arbitrary point $(u,v,t)$ of  $\Phi^{-1} (\Pi \cap \qthreept)$  with $\Phi$ in \eqref{Eq:202601170935PM} satisfies

\begin{align}\label{Eq:202506190732AM}
	t = a(u^2+v^2)+ b + cu + dv \overset{(*)~}{=} a(u+\tfrac{c}{2a})^2 + a (v + \tfrac{d}{2a})^2 + \tfrac{4ab-c^2-d^2}{4a}.
\end{align}
The second equality $(*)$ assumes $a\not=0$.
So $\Pi \cap \qthreept$ is a graph over a subset of $\Pi_1$ (\textit{cf}. \eqref{Eq:202601260727PM}).

By analyzing the sign of $4ab-c^2-d^2$ and the positivity of $t$ in \eqref{Eq:202506190732AM}, we obtain the following assertions.
\begin{lemma}\label{Lem:202601171034AM}
	The graph of \eqref{202506180754PM} with \eqref{Eq:202506190732AM} is (a part of) \\
	\  (S) a sphere  iff  (S-i)  $a>0$, $b>\tfrac{c^2+d^2}{4a}$, \\
	\  (L) a horosphere iff  (L-i) $a=0$, $b>0$, $c=d=0$ or \ (L-ii) $a > 0$, $b= \frac{c^2+d^2}{4a}$. \\
	\  (T) a hypersphere iff (T-i) $a=0$, $c^2+d^2>0$, (T-ii) $a >0$, $b<\tfrac{c^2+d^2}{4a}$,
	or  (T-iii) $a<0$, $b>\tfrac{c^2+d^2}{4a}$. \\
	In all other cases for $a,b,c,d$, we have that $\Pi \cap \qthreept = \emptyset.$
\end{lemma}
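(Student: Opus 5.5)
Since $\Phi$ is a bijection onto $\qthreept$, the set $\Phi^{-1}(\Pi\cap\qthreept)$ is exactly the set of points $(u,v,t)$ with $t>0$ satisfying \eqref{Eq:202506190732AM}. It is therefore the graph of the quadratic polynomial $p(u,v):=a(u^2+v^2)+b+cu+dv$ restricted to the open set $\mcU:=\{p>0\}$. The plan has two parts. First, the causal type of $\Pi$ is read off from $\langle\vec n,\vec n\rangle=c^2+d^2-4ab$: negative for spacelike, zero for lightlike, positive for timelike, with the definitions of sphere, horosphere and hypersphere taken from Definition~\ref{Def:202601140823PM}. Second, we determine for which $(a,b,c,d)$ the set $\mcU$ is nonempty. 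Regularity is automatic on $\mcU$, since the map $(u,v)\mapsto\Phi(u,v,p(u,v))$ is a graph and hence an immersion. So the lemma reduces to a sign analysis of $p$, combined with the sign of $c^2+d^2-4ab$.

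\emph{Case $a=0$.} Here $p=b+cu+dv$ is affine. If $c^2+d^2>0$, then $p$ takes positive values on a half plane, so $\mcU\neq\emptyset$. Also $\langle\vec n,\vec n\rangle=c^2+d^2>0$, so the surface is a hypersphere; this is (T-i). If $c=d=0$, then $p\equiv b$. The set $\mcU$ is nonempty exactly when $b>0$, and then $\langle\vec n,\vec n\rangle=0$, so the surface is a horosphere; this is (L-i). If $c=d=0$ and $b\le 0$, the intersection is empty.

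\emph{Case $a\neq0$.} Complete the square as in $(*)$, with $m:=(4ab-c^2-d^2)/(4a)$. Note that $4ab-c^2-d^2=-\langle\vec n,\vec n\rangle$.
\begin{itemize}
\item If $a>0$, then $p$ is a convex paraboloid with minimum value $m$, so $\mcU\neq\emptyset$ always. The sign of $m$ equals the sign of $4ab-c^2-d^2$. Hence $b>\tfrac{c^2+d^2}{4a}$ gives a spacelike plane, i.e.\ a sphere, which is (S-i); here $\mcU=\mathbb{R}^2$. Equality gives a horosphere, which is (L-ii), with $\mcU=\mathbb{R}^2$ minus one point. Strict inequality the other way gives a hypersphere, which is (T-ii).
\item If $a<0$, then $p$ is concave with maximum value $m$, so $\mcU\neq\emptyset$ if and only if $m>0$. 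Since $a<0$, $m>0$ means $4ab-c^2-d^2<0$, i.e.\ $b>\tfrac{c^2+d^2}{4a}$ after dividing by $a<0$. In that case $\langle\vec n,\vec n\rangle>0$, so the plane is timelike and the surface is a hypersphere, giving (T-iii). If $m\le0$, the intersection is empty.
\end{itemize}

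Collecting the cases gives the three ``iff'' statements and the emptiness of $\Pi\cap\qthreept$ in all remaining cases. The main care point is the inequality direction when dividing by $a<0$, together with checking that no spacelike or lightlike plane with $a<0$ meets $\qthreept$. This holds because for $a<0$ the condition $\mcU\neq\emptyset$ forces $\langle\vec n,\vec n\rangle>0$.

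A further subtlety is that the lemma concerns $\qthreept$ rather than all of $\qthreep$. A plane could meet the removed ray $\{x_0-x_3=x_1=x_2=0\}$. The classification above depends only on $\Pi$ and on nonemptiness of the graph, so this does not affect it; we only note that the surfaces are ``(a part of)'' the corresponding sphere, horosphere or hypersphere.
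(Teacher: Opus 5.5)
Your proposal is correct and is essentially the paper's argument: the paper justifies the lemma only by ``analyzing the sign of $4ab-c^2-d^2$ and the positivity of $t$''. Your case split $a=0$, $a>0$, $a<0$, with the completed square and the inequality flip for $a<0$, is exactly that analysis written out, and the one point you leave implicit (that $\Pi\cap\qthreep$ is also regular at the extra point on the removed ray when $a>0$) follows because $\langle\vec n,x\rangle=1$ rules out $\vec n\parallel x$ for $x\in\qthreep$, so $\Pi$ is everywhere transverse to the cone.
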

See Figure~\ref{Fig:202603160953PM}.
\begin{lemma}\label{Lem:202601171033AM}
	The graph of \eqref{Eq:202506190732AM} is entire if and only if (S-i)  $a>0$, $b>\tfrac{c^2+d^2}{4a}$ or (L-i) $a=0$, $b>0$, $c=d=0$.
\end{lemma}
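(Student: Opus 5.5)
The plan is to read entireness directly off \eqref{Eq:202506190732AM}. The graph over $\mathcal{U}=\mathbb{R}^2$ is defined exactly when the quadratic polynomial
\[
  \tau(u,v)=a(u^2+v^2)+cu+dv+b
\]
is strictly positive on all of $\mathbb{R}^2$. Indeed, $\Phi$ is a bijection onto $\qthreept$, and the defining relation \eqref{Eq:202506190732AM} determines $t$ uniquely from $(u,v)$. So the point $(u,v,\tau(u,v))$ lies in the half space model, and hence $\Phi$ of it lies in $\Pi\cap\qthreept$, if and only if $\tau(u,v)>0$. The lemma therefore reduces to an elementary classification: determine when the polynomial $\tau$ is positive everywhere.

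I will split into three cases according to the sign of $a$.

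\emph{Case $a<0$.} Along $v=0$ we have $\tau\to-\infty$ as $|u|\to\infty$, so $\tau$ is not everywhere positive and the graph is not entire.

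\emph{Case $a=0$.} Here $\tau=cu+dv+b$ is affine. If $c^2+d^2>0$, then $\tau$ takes negative values on a half-plane, so the graph is not entire. If $c=d=0$, then $\tau\equiv b$, which is positive everywhere iff $b>0$; this is condition (L-i).

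\emph{Case $a>0$.} Completing the square as in $(*)$, the minimum of $\tau$ is $\tfrac{4ab-c^2-d^2}{4a}$, attained at $(u,v)=(-\tfrac{c}{2a},-\tfrac{d}{2a})$. Hence $\tau>0$ on $\mathbb{R}^2$ iff $4ab-c^2-d^2>0$, that is, iff $b>\tfrac{c^2+d^2}{4a}$; this is condition (S-i). In the boundary case (L-ii), $\tau$ vanishes at the vertex, so the graph misses that point and is not entire. In case (T-ii), $\tau$ is nonpositive on a closed disk, which is again excluded.

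Combining the three cases gives the stated equivalence; the list of cases in Lemma~\ref{Lem:202601171034AM} serves as a consistency check. I do not expect any real obstacle. The only point needing care is justifying that ``entire'' means positivity of $\tau$ on all of $\mathbb{R}^2$, and this follows from the graph definition \eqref{Eq:202505110555AM} together with \eqref{202506180754PM}.
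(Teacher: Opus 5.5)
Your proposal is correct and follows the paper's own route. The paper gives no written proof and only says the lemma follows from examining the positivity of $t$ in \eqref{Eq:202506190732AM} and the sign of $4ab-c^2-d^2$ after completing the square. Your reduction of entireness to $\tau>0$ on all of $\mathbb{R}^2$, via the bijectivity of $\Phi$ onto $\qthreept$, followed by the case split on the sign of $a$, is exactly that analysis written out in full.
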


\begin{figure}[t]
\begin{center}
\psfragfig[width=0.25\textwidth]{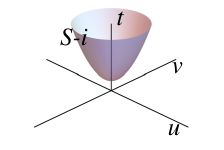}{
    \psfrag{X}{$u$}
    \psfrag{Y}{$v$}
    \psfrag{Z}{$t$}
    \psfrag{U}{$\tilde{u}$}
    \psfrag{V}{$\tilde{v}$}
    \psfrag{W}{$\tilde{w}$}
    \psfrag{Si}{\textit{S-i}}
  }%
  \hfill
\psfragfig[width=0.25\textwidth]{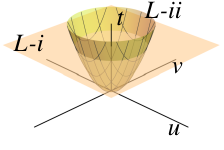}{
    \psfrag{X}{$u$}
    \psfrag{Y}{$v$}
    \psfrag{Z}{$t$}
    \psfrag{U}{$\tilde{u}$}
    \psfrag{V}{$\tilde{v}$}
    \psfrag{W}{$\tilde{w}$}
    \psfrag{Li}{\textit{L-i}}
    \psfrag{Lii}{\textit{L-ii}}
  }%
  \hfill
\psfragfig[width=0.25\textwidth]{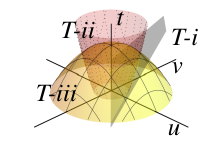}{
    \psfrag{X}{$u$}
    \psfrag{Y}{$v$}
    \psfrag{Z}{$t$}
    \psfrag{U}{$\tilde{u}$}
    \psfrag{V}{$\tilde{v}$}
    \psfrag{W}{$\tilde{w}$}
    \psfrag{Ti}{\textit{T-i}}
    \psfrag{Tj}{\textit{T-ii}}
    \psfrag{Tk}{\textit{T-iii}}
  }
  \bigskip
\psfragfig[width=0.25\textwidth]{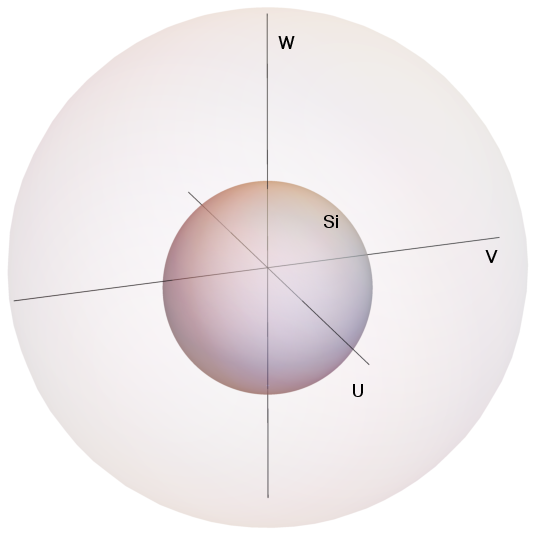}{
    \psfrag{X}{$u$}
    \psfrag{Y}{$v$}
    \psfrag{Z}{$t$}
    \psfrag{U}{$\tilde{u}$}
    \psfrag{V}{$\tilde{v}$}
    \psfrag{W}{$\tilde{w}$}
    \psfrag{Si}{\textit{S-i}}
  }%
  \hfill
\psfragfig[width=0.25\textwidth]{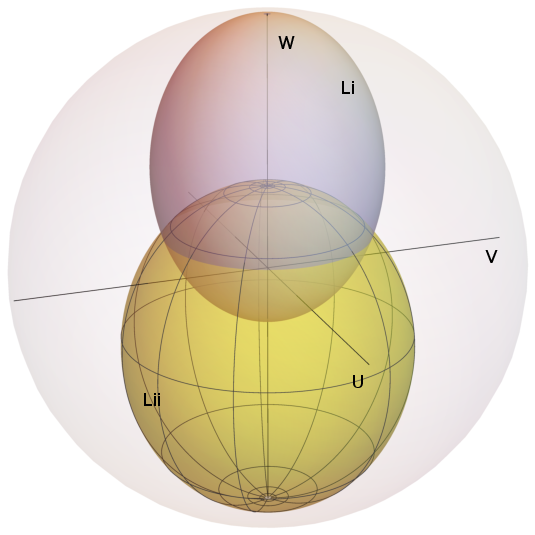}{
    \psfrag{X}{$u$}
    \psfrag{Y}{$v$}
    \psfrag{Z}{$t$}
    \psfrag{U}{$\tilde{u}$}
    \psfrag{V}{$\tilde{v}$}
    \psfrag{W}{$\tilde{w}$}
    \psfrag{Li}{\textit{L-i}}
    \psfrag{Lii}{\textit{L-ii}}
  }%
  \hfill
\psfragfig[width=0.25\textwidth]{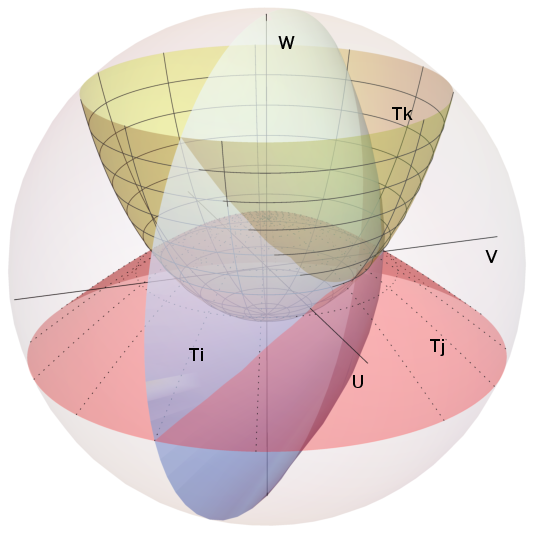}{
    \psfrag{X}{$u$}
    \psfrag{Y}{$v$}
    \psfrag{Z}{$t$}
    \psfrag{U}{$\tilde{u}$}
    \psfrag{V}{$\tilde{v}$}
    \psfrag{W}{$\tilde{w}$}
    \psfrag{Ti}{\textit{T-i}}
    \psfrag{Tj}{\textit{T-ii}}
    \psfrag{Tk}{\textit{T-iii}}
  }
  \caption{
    A sphere, two horospheres, and three hyperspheres in $\qthreep$, in the half space model (top row) and in the punctured ball model (bottom row). For labels such as {\it S-i} in the figure, see Lemma~\ref{Lem:202601171034AM}.
  }
\label{Fig:202603160953PM} 
\end{center}
\end{figure}

\textit{ (S-i) } is equivalent to saying that $a>0$ and that the plane $\Pi$ is spacelike.
\textit{(L-i) } is equivalent to saying that $\Pi$ is a lightlike hyperplane parallel to $\Pi_1$.

\textit{(L-i)} and \textit{(L-ii)} show that the horospheres touch the ideal boundary tangentially only once.
Note that if
\begin{equation}\label{Eq:202603060429PM}
	X(u,v) := \left( u,v, a(u^2+v^2) + b + cu + dv \right)
\end{equation}
and  $a>0$,
then direct calculations show that for any $\theta \in \mathbb{R}$
$$
	\lim_{R \to \infty} \Phi \circ X \left( - \frac{c}{2a}+R\cos\theta, - \frac{d}{2a}+R\cos\theta \right)
	= \begin{pmatrix} a^{-1} & 0 \\ 0 & 0 \end{pmatrix}
	\in \qthreep\setminus\qthreept,
$$
which shows that the image of a sphere, a horosphere, or a hypersphere in the form of $X$ in \eqref{Eq:202603060429PM} with $a>0$ approach
as $t \mapsto \infty$ not a point in the ideal boundary but a point in the lightlike line $\qthreep\setminus\qthreept$.

Direct calculations show that the mean curvature of  the graph  given by \eqref{202506180754PM} and \eqref{Eq:202506190732AM} is
\begin{equation}\label{Eq:202506190735AM}
	H
	= \frac{-4ab+c^2+d^2}{2}
	= \frac{ \langle \vec{n}, \vec{n} \rangle }{2}.
\end{equation}
Thus $\Pi$ is timelike, lightlike or spacelike if and only if $H<0$, $H=0$ or $H>0$, respectively.
We immediately obtain the followings:
\begin{lemma}
	Let $X$ be a totally umbilic surface. Then,
	\begin{itemize}
		\item  $X$ has CMC $\h=0$ iff it is a horosphere.
		\item  $X$ has CMC $\h<0$ iff it is a sphere.
		\item  $X$ has CMC $\h>0$ iff it is a hypersphere.
	\end{itemize}
\end{lemma}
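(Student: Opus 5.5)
The lemma follows by combining Proposition~\ref{Prop:202601150810PM} with the explicit mean curvature formula \eqref{Eq:202506190735AM}. First I reduce an arbitrary connected totally umbilic surface $X$ to a hyperplane section. By Proposition~\ref{Prop:202601150810PM}, $X$ is (part of) a sphere, horosphere, or hypersphere, i.e.\ it lies in $\Pi\cap\qthreep$ for some hyperplane $\Pi$. Since $X$ is spacelike, $\Pi$ does not pass through the origin, so $\Pi$ can be normalized as in \eqref{Eq:202507191432}. Its type is then read off from the sign of $\langle \vec n,\vec n\rangle=-4ab+c^2+d^2$.

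Next I compute $H$ from the graph representation. After an isometry, I may assume that $X$ meets $\qthreept$ in a nonempty open set; the removed lightlike line has empty interior in any surface. There, $X$ is locally the graph \eqref{Eq:202506190732AM}, and \eqref{Eq:202506190735AM} gives $H=\langle\vec n,\vec n\rangle/2$. Because $H$ is an intrinsic quantity (by the Gauss equation) and is constant on connected totally umbilic surfaces (shown in the proof of Proposition~\ref{Prop:202601150810PM}), this value holds on all of $X$. An alternative route avoids coordinates: the proof of Proposition~\ref{Prop:202601150810PM} already yields $\langle\vec d,\vec d\rangle=2/H$ when $H\neq0$, and $G$ constant with $X$ in a lightlike hyperplane when $H=0$. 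That also gives the correspondence directly.

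Finally I match signs with types. $H=0$ iff $\langle\vec n,\vec n\rangle=0$ iff $\Pi$ is lightlike, i.e.\ $X$ is a horosphere. $H<0$ iff $\langle\vec n,\vec n\rangle<0$ iff $\Pi$ is spacelike, i.e.\ $X$ is a sphere. $H>0$ iff $\Pi$ is timelike, i.e.\ $X$ is a hypersphere. Each converse follows because the three types are mutually exclusive and exhaust all totally umbilic surfaces.

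The only genuine check is that \eqref{Eq:202506190735AM} holds in all cases of Lemma~\ref{Lem:202601171034AM}, including $a=0$. This follows from $H=-2\tau^2(\ln\tau)_{z\bar z}$ with $\tau=a|z|^2+b+cu+dv$, using $\tau_{z\bar z}=a$ and $|\tau_z|^2=\tfrac14|2a z+c-id|^2$. These give $H=-2(\tau\tau_{z\bar z}-|\tau_z|^2)=-2ab+\tfrac12(c^2+d^2)$, which is a routine expansion valid for every $a$.
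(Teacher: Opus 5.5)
Your proposal is correct and follows the paper's route: Proposition~\ref{Prop:202601150810PM} places $X$ in a hyperplane section, and the direct computation \eqref{Eq:202506190735AM}, $H=\langle\vec n,\vec n\rangle/2$, matches the sign of $H$ with the causal type of $\Pi$. Your matching (spacelike, lightlike, timelike $\leftrightarrow$ $H<0$, $H=0$, $H>0$) is the correct one. There is one small slip in your verification: $\tau_z=a\bar z+\tfrac12(c-id)$, so $|\tau_z|^2=\tfrac14|2az+c+id|^2$, not $\tfrac14|2az+c-id|^2$. With your sign the $v$-terms would not cancel, although your final value $-2ab+\tfrac12(c^2+d^2)$ is correct.
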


\begin{corollary}\label{lemma:converse}
	Every entire and totally umbilic graph must  have CMC $\h \le 0$.
\end{corollary}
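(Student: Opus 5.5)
The plan is to combine Proposition~\ref{Prop:202601150810PM}, the classification of hyperplane graphs in Lemmas~\ref{Lem:202601171034AM} and~\ref{Lem:202601171033AM}, and the mean curvature formula \eqref{Eq:202506190735AM}. Let $X$ be an entire, totally umbilic graph of $t=\tau(u,v)$ over $\Pi_1$. The domain $\mathbb{R}^2$ is connected, so Proposition~\ref{Prop:202601150810PM} applies. It shows that $X$ has constant mean curvature $\h$ and lies in some hyperplane $\Pi$ of $\lfour$. As explained before \eqref{Eq:202507191432}, $\Pi$ cannot pass through the origin, since otherwise $\Pi\cap\qthreep$ would not be spacelike while our graph is spacelike by \eqref{Eq:202601180922AM}. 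So we may normalize $\Pi$ to the form \eqref{Eq:202507191432}.

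Next we identify $\tau$. Every point of the graph lies in $\qthreept$ and is written as in \eqref{202506180754PM}. Substituting into \eqref{Eq:202507191432} gives $\tau(u,v)=a(u^2+v^2)+b+cu+dv$ for all $(u,v)\in\mathbb{R}^2$, as in \eqref{Eq:202506190732AM}. This step deserves care: the graph is contained in $\Pi\cap\qthreept$, so the equation holds at every point of $\mathbb{R}^2$. Hence $\tau$ is exactly this quadratic, the graph of \eqref{Eq:202506190732AM} is entire, and $\tau>0$ everywhere.

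Lemma~\ref{Lem:202601171033AM} then leaves only two cases. In case (S-i) we have $a>0$ and $4ab>c^2+d^2$. In case (L-i) we have $a=0$, $b>0$ and $c=d=0$. Formula \eqref{Eq:202506190735AM} gives $\h=(-4ab+c^2+d^2)/2$, which is negative in case (S-i) and zero in case (L-i). Therefore $\h\le 0$.

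The only delicate point is justifying that the surface, and not merely its trace, coincides with the full entire graph of the quadratic. This follows at once because both are graphs over the same domain $\mathbb{R}^2$ with the same height function.
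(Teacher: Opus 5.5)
Your proof is correct and matches the paper's (implicit) argument: Proposition~\ref{Prop:202601150810PM} puts the graph in a hyperplane section, Lemma~\ref{Lem:202601171033AM} shows that entireness forces case (S-i) or (L-i), and formula \eqref{Eq:202506190735AM} then gives $\h<0$ or $\h=0$. Your explicit checks that $\Pi$ misses the origin and that $\tau$ equals the quadratic on all of $\mathbb{R}^2$ simply fill in what the paper leaves to ``we immediately obtain''.
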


\section{Bernstein-type theorems in $\mathbb{Q}^3_+$}

For the demonstration of ideas in this section, it is better to use $\omega := -\ln \tau$ instead of $\tau$.
In terms of $z:=u+iv$ and $\omega := -\ln \tau$, \eqref{Eq:202601180922AM} and \eqref{Eq:202601180923AM} may be written as
\begin{gather}
	\label{Eq:202601140915PM}
	(\mathbf{g}_{ij})  = \begin{pmatrix} e^{2\omega} & 0 \\ 0 & e^{2\omega} \end{pmatrix}, \qquad
	(\mathbf{A}_{ij})  = \begin{pmatrix} \omega_{uu} & \omega_{uv} \\ \omega_{uv} & \omega_{vv} \end{pmatrix}
	- \frac{1}{2} \begin{pmatrix} \omega_{u}^2-\omega_v^2 & 2\omega_u \omega_v \\ 2\omega_u \omega_v & -\omega_{u}^2+\omega_v^2 \end{pmatrix},
	\\
	\label{Eq:202507101519}
	Q = -\omega^2_z+\omega_{zz}\ ,\quad
	H = 2 e^{-2\omega} \omega_{z\bar{z}}\ ,\quad
	H^2-K=4 e^{-4\omega}|\omega^2_z-\omega_{zz}|^2\ge0.
\end{gather}

\subsection{Bernstein-type theorem for ZMC graphs in $\qthreep$}\label{Sec:3.2}

First, we consider entire ZMC graphs. The following assertion is a key to prove a Bernstein-type theorem for these surfaces.

\begin{lemma}\label{lemma:Schwarzian_ZMC}
	For any ZMC surface $X$ in $\qthreep$, there exists a holomorphic function $g$ with non-vanishing derivative such that the Hopf differential $Q$ and the function $\omega$ in \eqref{Eq:202601140915PM} satisfy
	\begin{equation}\label{eq:S(g)}
		S(g) =2Q\quad \text{and} \quad e^\omega = |g_z|,
	\end{equation}
	where $S(g)=\frac{g_{zzz}}{g_z}-\frac{3}{2}\left(\frac{g_{zz}}{g_z}\right)^2$ is the Schwarzian derivative of $g$.
	Moreover, the function $g$ is unique up to M\"obius transformations $g \mapsto \hat{g}=ag+b$ for constants $a,b \in \mathbb{C}$ with $|a|=1$.
\end{lemma}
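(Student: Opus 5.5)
Since $H=0$, the formula $H=2e^{-2\omega}\omega_{z\bar z}$ in \eqref{Eq:202507101519} gives $\omega_{z\bar z}=0$, so $\omega$ is harmonic on $\mathcal U$. The plan is to work first on a simply connected domain; locally this is automatic, and for an entire graph $\mathcal U=\mathbb R^2$. There we write $\omega=\operatorname{Re} h$ for a holomorphic $h$, unique up to an additive imaginary constant $i c$. Since $\omega$ is real we have $\omega_z=\tfrac12 h_z$. We then define $g$ as a primitive of $e^{h}$, i.e. $g_z=e^{h}$. This $g$ is holomorphic with $g_z\neq 0$, and $|g_z|=e^{\operatorname{Re}h}=e^\omega$, which is the second identity in \eqref{eq:S(g)}.

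For the Schwarzian identity, put $\phi:=\log g_z=h$, so that $\phi_z=2\omega_z$ and $\phi_{zz}=2\omega_{zz}$. With this notation
\[
S(g)=\phi_{zz}-\tfrac12\phi_z^2=2\omega_{zz}-2\omega_z^2=2(\omega_{zz}-\omega_z^2)=2Q
\]
by the formula $Q=-\omega_z^2+\omega_{zz}$ in \eqref{Eq:202507101519}. This is a routine check, and it already gives existence.

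For uniqueness, suppose $\hat g$ is another such function. From $|\hat g_z|=|g_z|$, the ratio $\hat g_z/g_z$ is holomorphic of constant modulus $1$, hence a constant $a$ with $|a|=1$ by the open mapping theorem (or maximum principle) on a connected domain. Integrating gives $\hat g=ag+b$. Conversely, any such $\hat g$ satisfies $|\hat g_z|=|g_z|$, and $S(\hat g)=S(g)$ by affine invariance of the Schwarzian. It is worth noting that the condition $e^\omega=|g_z|$ alone already forces the affine form, so we do not need the full Möbius invariance of $S$ beyond affine maps.

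The main obstacle is global existence of $h$ on a non-simply connected $\mathcal U$. The harmonic conjugate may have periods, and $e^h$ may then fail to have a single-valued primitive. The statement is intended for graphs, used below with $\mathcal U=\mathbb R^2$. I would therefore state the lemma on simply connected domains, or interpret $g$ locally; in the entire case both the conjugate and the primitive exist globally, so no issue arises.
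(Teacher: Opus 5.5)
Your proposal is correct and follows essentially the same route as the paper. Harmonicity of $\omega$ comes from $H=0$; a holomorphic $h$ (the paper's $F$) with $h_z=2\omega_z$ gives $g$ as a primitive of $e^{h}$; and uniqueness comes from the holomorphic ratio $\hat g_z/g_z$ having constant modulus $1$.

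Two of your remarks are correct refinements of points the paper leaves implicit. First, the normalization $\operatorname{Re}h=\omega$ is needed, since $F_z=2\omega_z$ alone fixes $F$ only up to an additive constant, and its real part would rescale $|g_z|$. Second, global existence of $g$ requires $\mathcal U$ to be simply connected, which holds for the entire graphs where the lemma is applied.
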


\begin{proof}
	By the equation \eqref{Eq:202507101519}, $\omega$ is harmonic. Then there exists a holomorphic function $F$ such that $F_z=2\omega_z$. A desired function $g$ is given by $g=\int^z e^{F(w)}dw$.

	The uniqueness of $g$ follows from \eqref{eq:S(g)}. In fact, if a holomorphic function $\hat{g}$ satisfies \eqref{eq:S(g)}, then $|\hat{g}_z/g_z|=1$ holds by the second equation of \eqref{eq:S(g)}. The holomorphicity of $\hat{g}_z/g_z$ implies that $\hat{g}_z=ag_z$ for some constant $a$ with $|a|=1$. Therefore, we obtain $\hat{g}=ag+b$ for some $b\in \mathbb{C}$. For such transformations, the first equation of \eqref{eq:S(g)} is preserved since $S(g)$ is invariant under M\"obius transformations.
\end{proof}

\begin{proposition}\label{prop:zmc}
	Let $X$ be an entire ZMC graph in $\qthreep$. If the Gaussian curvature $K$ of $X$ is bounded below, then $X$ must be a horosphere.
\end{proposition}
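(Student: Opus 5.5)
Our plan is to reduce the statement to Liouville-type arguments for entire holomorphic functions, using Lemma~\ref{lemma:Schwarzian_ZMC}. Since $X$ is an entire ZMC graph, $\omega=-\ln\tau$ is a harmonic function on all of $\mathbb{C}$ by \eqref{Eq:202507101519}. Lemma~\ref{lemma:Schwarzian_ZMC} then gives an entire holomorphic $g$ with $g_z$ nowhere zero, $e^\omega=|g_z|$ and $S(g)=2Q$. Because $H=0$, the Gaussian curvature is $K=-4|Q|^2e^{-4\omega}\le 0$. The hypothesis $K\ge -C$ for some constant $C\ge0$ therefore becomes the pointwise inequality
\[
|Q|\le \tfrac{\sqrt{C}}{2}\,e^{2\omega}=\tfrac{\sqrt{C}}{2}\,|g_z|^2 \qquad\text{on } \mathbb{C}.
\]

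Since $g_z$ is entire and nowhere zero, we can write $g_z=e^{h}$ with $h$ entire, so that $\omega=\operatorname{Re}h$. A direct computation gives
\[
Q=\omega_{zz}-\omega_z^2=\tfrac12 h_{zz}-\tfrac14 h_z^2,
\]
which is consistent with $S(g)=h_{zz}-\tfrac12 h_z^2=2Q$. The curvature bound then states that the entire function $Q e^{-2h}$ satisfies $|Qe^{-2h}|\le \sqrt{C}/2$. By Liouville's theorem, $Q=\lambda e^{2h}$ for some constant $\lambda\in\mathbb{C}$. The goal is to show $Q\equiv 0$. If so, the surface is totally umbilic, since $K=H^2=0$ and hence $A=\tfrac H2 g$. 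Proposition~\ref{Prop:202601150810PM}, together with the lemma characterizing the CMC $\h=0$ totally umbilic surfaces, then identifies $X$ as a horosphere. Equivalently, by \eqref{Eq:202601180923AM}, $Q=-\tau_{zz}/\tau=0$ together with harmonicity of $\ln\tau$ forces $\tau$ to be constant, given positivity on all of $\mathbb{R}^2$.

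The main obstacle is ruling out $\lambda\neq0$, that is, showing that the ODE
\[
\tfrac12 h''-\tfrac14 (h')^2=\lambda e^{2h}
\]
has no entire solution $h$. Our first approach is to substitute $\phi=e^{-h/2}$. This gives $\phi''=-\tfrac12 h''\phi+\tfrac14 (h')^2\phi=-\lambda e^{2h}\phi=-\lambda\phi^{-3}$, so $\phi$ is an entire, nowhere-vanishing solution of $\phi''=-\lambda\phi^{-3}$.

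Multiplying by $\phi'$ and integrating yields the first integral $(\phi')^2=\lambda\phi^{-2}+\mu$, that is, $(\phi\phi')^2=\lambda+\mu\phi^2$. Setting $\psi=\phi^2$, which is entire and nowhere zero, we obtain $(\psi')^2=4\lambda+4\mu\psi$.
\begin{itemize}
\item If $\mu=0$, then $\psi'$ is constant and nonzero (because $\lambda\neq 0$). So $\psi$ is a nonconstant linear function, which has a zero. This is a contradiction.
\item If $\mu\neq0$, differentiating gives $\psi''=2\mu$, so $\psi$ is a quadratic polynomial, and it vanishes somewhere, again a contradiction. (The case $\psi'\equiv0$ would force $\lambda+\mu\psi=0$ with $\psi$ constant, which gives $\phi'=0$ and then $\lambda=0$ from the ODE, also a contradiction.)
\end{itemize}

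Hence $\lambda=0$ and $Q\equiv0$, which completes the argument. The converse direction, that the horosphere $t=t_0$ has $K=0$, is immediate.
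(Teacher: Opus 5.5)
Your argument is correct and is essentially the paper's proof. The lower bound on $K$ makes the entire function $Q/g_z^2=S(g)/(2g_z^2)$ bounded, hence constant by Liouville. Your $\psi=\phi^2=e^{-h}$ is exactly the paper's $M=1/g_z$, which must be a nowhere-vanishing polynomial of degree at most two.

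The difference is minor. The paper differentiates $-MM_{zz}+\tfrac12 M_z^2=\text{const}$ to get $MM_{zzz}=0$, which forces $M$ (hence $\omega$) to be constant in one step. That avoids splitting off $\lambda\neq0$ and avoids the totally umbilic classification. You instead use a first integral and finish the $\lambda=0$ case through $Q\equiv0$.
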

\begin{proof}
	Let $g$ be given by Lemma \ref{lemma:Schwarzian_ZMC}. Then
	the equations \eqref{Eq:202507101519} and \eqref{eq:S(g)} imply the relation
	\begin{equation}\label{eq:K_S(g)}
		K = -\left| \frac{S(g)}{g_z^2}\right|^2.
	\end{equation}
	$S(g)/g_z^2$ is defined all over $\mathbb{C}$ and holomorphic everywhere.
	Since $K$ is bounded below, we may conclude that $S(g)/g_z^2$ is constant by the Liouville theorem.

	Let $M(z) := 1/g_z$. Then a straightfoward calculation shows that $S(g)/g_z^2$ can be written as
	$$
		-M M_{zz} + \frac{1}{2} M_z^2 = \frac{S(g)}{g_z^2}.
	$$
	Since it is a constant, by taking the derivative of it, we have
	\(
	-M_zM_{zz}-MM_{zzz}+M_zM_{zz}=0,
	\)
	that is, $MM_{zzz}=0$. Therefore, $M$ is
	\(
	M=c_0+c_1z+c_2z^2
	\)
	for some $c_i\in \mathbb{C}$ ($i=0,1,2$). By definition, $M(u,v) \not= 0$ at any $(u,v)$, and this forces
	\(
	c_1=c_2=0.
	\)
	In this case,  $M$, hence $g_z$ and $\omega$, are all constant by \eqref{eq:S(g)}. So $X$ is a horosphere.
\end{proof}

\begin{corollary}
	The Gaussian curvature of any entire ZMC graph in $\qthreep$ except horospheres takes all negative values. In particular,
	if an entire graph in $\qthreep$ has ZMC and its Gaussian curvature has a negative exceptional value, then it must be a horosphere.
\end{corollary}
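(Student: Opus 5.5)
The corollary has two assertions, and I would take them in turn, starting from the formula \eqref{eq:K_S(g)} in the proof of Proposition~\ref{prop:zmc}. Let $X$ be an entire ZMC graph and let $g$ be the holomorphic function from Lemma~\ref{lemma:Schwarzian_ZMC}. Set $h := S(g)/g_z^2$, which is an entire holomorphic function on $\mathbb{C}$. Then
\[
K = -|h|^2 ,
\]
so the values of $K$ are exactly the values $-|w|^2$ as $w$ ranges over $h(\mathbb{C})$.

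Suppose first that $X$ is not a horosphere. By Proposition~\ref{prop:zmc}, $K$ is unbounded below, hence $h$ is not constant. The aim is then to show that $|h|$ takes every positive value. I would argue this through the little Picard theorem. A nonconstant entire function omits at most one complex value. If $|h|$ omitted a value $r>0$, then $h$ would omit the entire circle $\{|w|=r\}$, which contains infinitely many points, a contradiction. So $|h|(\mathbb{C}) \supset (0,\infty)$, and therefore $K$ takes every negative value.

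Picard's theorem is heavier than necessary here, and connectedness is a possible alternative. Since $h(\mathbb{C})$ is a connected, unbounded set, $|h|(\mathbb{C})$ is an interval that is unbounded above. To conclude this way one also needs values of $|h|$ arbitrarily close to $0$, which connectedness alone does not supply. I would therefore rely on Picard as the robust route. This is the only step that needs care: one must make sure the argument gives \emph{all} negative values and not just a half-line.

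The second assertion then follows at once. If $K$ omits some negative value, the first part forces $X$ to be a horosphere. Conversely, horospheres have $K\equiv 0$, since they are totally geodesic by Proposition~\ref{Prop:202601150810PM}, so they omit every negative value. This makes the statement consistent.
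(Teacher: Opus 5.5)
Your proposal is correct and is essentially the paper's proof. Both write $K=-|S(g)/g_z^2|^2$, apply the little Picard theorem to show that a nonconstant $S(g)/g_z^2$ takes all negative values of $K$, and use Proposition~\ref{prop:zmc} for the constant case. (Incidentally, the elementary route you set aside also works: a nonconstant entire function has dense image by Liouville's theorem, so $|h|(\mathbb{C})$ is an interval dense in $[0,\infty)$ and therefore contains $(0,\infty)$.)
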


\begin{proof}
	Let $g$ be given by Lemma \ref{lemma:Schwarzian_ZMC}.
	Then it satisfies \eqref{eq:K_S(g)}.
	Suppose that the holomorphic function $S(g)/g_z^2$ is not constant. Then, because of Picard's little theorem, the image of $S(g)/g_z^2$ must be either the entire complex plane or the complex plane minus a point. Therefore $K$ takes all negative values, which contradicts the assumption that $K$ has a negative exceptional value. So $K$ is constant. Finally, by Proposition \ref{prop:zmc}, we obtain the desired result.
\end{proof}
Similar results hold regarding the value distribution of the Gaussian curvature of complete spacelike CMC surfaces in the isotropic $3$-space $\mathbb{I}^3$ \cite{ALY1}.

\subsection{Bernstein-type Theorem for nonzero CMC graphs in $\mathbb{Q}^3_+$}
The graph of $\omega = \omega(z, \bar{z})$ in $\qthreep$ has CMC $\h$ if and only if
\begin{align}\label{Eq:202507101527}
	2 e^{-2\omega} \omega_{z\bar{z}}= \h .
\end{align}
See \eqref{Eq:202507101519}. If $\h \not= 0$ this is in fact the famous Liouville's equation and its solutions are all known. (\textit{cf}. \cite{BHL}, \cite{Liouville} for example.)

\begin{fact}[Solution of Liouville's equation]\label{Liouville}
	Let $\omega$ be defined on a simply connected domain $\mathcal{U}\subset\mathbb{C}$.
	Then,
	$ce^{-2\omega}\omega_{z\bar{z}}=-1$  for some constant  $c\in\mathbb{R}^+$
	if and only if
	there exist a meromorphic function
	$g(z):\mathcal{U}\to\hat{\mathbb{C}}:=\mathbb{C}  \cup \{\infty\}$ with $g_z\ne0$ for all $z$, such that
	\begin{align}\label{Eq:202507202002}
		e^{\omega}= \frac{\sqrt{c}|g_z|}{1+g\bar{g}}
	\end{align}
	which is unique up to M\"obius transformations $ g \mapsto  A \star g$ for $A \in SU(2)$.

	On the other hand,
	$ce^{-2\omega}\omega_{z\bar{z}}=1$ for some constant $c\in\mathbb{R}^+$
	if and only if there exist a meromorphic function $g(z):\mathcal{U}\to\hat{\mathbb{C}}\setminus\mathbb{S}^1$ with $g_z\ne0$ for all $z$, such that
	\begin{align}\label{Eq:202507202003}
		e^{\omega}= \frac{\sqrt{c}|g_z|}{1-g\bar{g}} \quad \text{for}\quad |g|<1, \quad \text{or}\quad e^{\omega}= \frac{\sqrt{c}|g_z|}{g\bar{g}-1}\quad \text{for}\quad |g|>1
	\end{align}
	which is unique up to M\"obius transformations $g \mapsto B \star g$ for $B \in SU(1,1)$.
\end{fact}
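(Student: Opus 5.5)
The plan is to prove both halves of the Fact by one linearization argument. We reduce Liouville's equation to a linear second-order ODE whose coefficient is a holomorphic "Schwarzian" quantity. We then read off $g$ as a ratio of two solutions of that ODE. Throughout, write $\varepsilon=\mp1$ for the two cases $ce^{-2\omega}\omega_{z\bar z}=\mp1$, i.e. $\omega_{z\bar z}=\varepsilon e^{2\omega}/c$.

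The "if" directions are direct computations. For $e^{\omega}=\sqrt{c}|g_z|/(1+g\bar g)$ one has $\omega=\tfrac12\ln c+\tfrac12\ln g_z+\tfrac12\ln\bar g_z-\ln(1+g\bar g)$. Hence $\omega_{z\bar z}=-g_z\bar g_z/(1+g\bar g)^2=-e^{2\omega}/c$. At poles of $g$ we replace $g$ by $1/g$, which leaves the formula invariant. The $|g|<1$ and $|g|>1$ cases of the second equation are analogous. The same computation shows that $A\star g$ for $A\in SU(2)$, resp.\ $B\star g$ for $B\in SU(1,1)$, leaves $e^\omega$ unchanged. The reason is that these are exactly the M\"obius maps preserving the Hermitian forms $|y_1|^2\pm|y_2|^2$.

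For the "only if" direction, set $s:=\omega_{zz}-\omega_z^2$. Using the equation,
\[
s_{\bar z}=(\omega_{z\bar z})_z-2\omega_z\omega_{z\bar z}=\tfrac{2\varepsilon}{c}\omega_z e^{2\omega}-\tfrac{2\varepsilon}{c}\omega_z e^{2\omega}=0,
\]
so $s$ is holomorphic. Next put $\varphi:=e^{-\omega}>0$. Then $\varphi_z=-\omega_z\varphi$ and $\varphi_{zz}=(\omega_z^2-\omega_{zz})\varphi=-s\varphi$, and by conjugation $\varphi_{\bar z\bar z}=-\bar s\varphi$. Since $\mathcal U$ is simply connected, the linear ODE $y''+sy=0$ has a holomorphic fundamental system $y_1,y_2$ on $\mathcal U$ with constant Wronskian $W=y_1y_2'-y_1'y_2\neq0$. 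Regarding $\varphi_{zz}=-s\varphi$ as an ODE in $z$ with $\bar z$ as a parameter gives $\varphi=y_1\alpha+y_2\beta$ with $\alpha,\beta$ antiholomorphic. Applying the conjugate equation shows that $\alpha,\beta$ are combinations of $\bar y_1,\bar y_2$. Therefore
\[
\varphi=\sum_{i,j} h_{ij}\,y_i\bar y_j ,
\]
where reality of $\varphi$ forces $(h_{ij})$ to be a constant Hermitian matrix. This step is the main obstacle: one must justify carefully the separation-of-variables argument, namely that $\alpha,\beta$ are genuinely antiholomorphic and lie in the conjugate solution space. I would do this by writing $\varphi_z=y_1'\alpha+y_2'\beta$ and solving the $2\times2$ Wronskian system for $\alpha,\beta$. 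Then I would differentiate in $z$ to get $\alpha_z=\beta_z=0$, and differentiate twice in $\bar z$ to get $\alpha_{\bar z\bar z}=-\bar s\alpha$ and $\beta_{\bar z\bar z}=-\bar s\beta$.

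Substituting this representation into the equation, which in terms of $\varphi$ reads $c(\varphi\varphi_{z\bar z}-\varphi_z\varphi_{\bar z})=-\varepsilon$, yields $c\det(h)\,|W|^2=-\varepsilon$. For $\varepsilon=-1$ we get $\det h>0$. Together with $\varphi>0$ this makes $h$ positive definite, so $h=P^*P$ with $\det P=(\sqrt c|W|)^{-1}$ up to normalization. Replacing $(y_1,y_2)$ by $P(y_1,y_2)^t$ (which changes $W$ by a constant) gives $\varphi=(|y_1|^2+|y_2|^2)/(\sqrt c|W|)$. With $g:=y_1/y_2$, which is meromorphic with $g_z=-W/y_2^2\neq0$, this becomes exactly $e^\omega=\sqrt c|g_z|/(1+g\bar g)$. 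For $\varepsilon=+1$ we get $\det h<0$, so $h$ is congruent to $\mathrm{diag}(1,-1)$ up to scaling. This yields $e^{\omega}=\sqrt c|g_z|/\pm(1-g\bar g)$. Positivity of $\varphi$ forces $|g|\neq1$ everywhere, and by connectedness either $|g|<1$ or $|g|>1$ throughout.

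Uniqueness follows from the same picture. Another admissible $\tilde g$ arises from another normalized basis of the same solution space, because $s$ is determined by $\omega$. So $\tilde g$ is a M\"obius image of $g$ whose matrix preserves the normalized Hermitian form $h$. These matrices are precisely $SU(2)$, resp.\ $SU(1,1)$.
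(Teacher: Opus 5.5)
Your proof is correct, and it supplies something the paper does not: the paper states this as a Fact without proof and refers to the classical literature on Liouville's equation. Your linearization is a complete, self-contained argument. The quantity $s=\omega_{zz}-\omega_z^2$ is holomorphic, and $\varphi=e^{-\omega}$ satisfies $\varphi_{zz}=-s\varphi$ together with its conjugate. Your explicit Wronskian formulas for $\alpha,\beta$ make the separation of variables rigorous, and the same Wronskian trick gives linear independence of the $y_i\bar y_j$, hence Hermitian symmetry of $h$. The factorization $\begin{pmatrix}\varphi&\varphi_{\bar z}\\ \varphi_z&\varphi_{z\bar z}\end{pmatrix}=M\,h\,\overline{M}^{\,t}$, with $M$ the Wronski matrix of $y_1,y_2$, gives $c\det h\,|W|^2=-\varepsilon$, and from there the signature of $h$ decides everything.

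Beyond being self-contained, your route ties the Fact to the rest of the paper:
\begin{itemize}
\item Your $s$ is exactly the Hopf differential $Q$ of \eqref{Eq:202507101519}. Since $g=y_1/y_2$ is a ratio of solutions of $y''+Qy=0$, you get $S(g)=2Q$ for free. This is Lemma~\ref{Lem:202602190936PM}, which the paper proves by a separate direct calculation.
\item Running your argument with $\varepsilon=0$ gives $\det h=0$ and $\varphi=|y_2|^2$. This recovers Lemma~\ref{lemma:Schwarzian_ZMC} together with its uniqueness group $g\mapsto ag+b$, $|a|=1$.
\item In general $\det h\,|W|^2=-H/2$, so the trichotomy $\det h>0,=0,<0$ is the trichotomy $H<0,=0,>0$. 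For $Q=0$ and $y=(z,1)$ this is precisely \eqref{Eq:202506190735AM}.
\end{itemize}
The citation is shorter, but it leaves all of these connections implicit.

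Two places in your uniqueness paragraph should be tightened.

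First, ``because $s$ is determined by $\omega$'' needs the observation that an arbitrary admissible $\tilde g$ is a ratio of solutions of $y''+sy=0$. Put $\tilde y_2=\tilde g_z^{-1/2}$ and $\tilde y_1=\tilde g\,\tilde g_z^{-1/2}$. A global branch exists on the simply connected $\mathcal U$, because $1/\tilde g_z$ is holomorphic with only double zeros, located at the simple poles of $\tilde g$. These functions solve $y''+\tfrac12 S(\tilde g)\,y=0$, have Wronskian $-1$, and give $\varphi=(|\tilde y_1|^2+|\tilde y_2|^2)/\sqrt c$, or $\pm(|\tilde y_1|^2-|\tilde y_2|^2)/\sqrt c$ when $\varepsilon=+1$. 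Then $\varphi_{zz}=-s\varphi$ forces $S(\tilde g)=2s$, so $\tilde y=My$ for a constant matrix $M$. Linear independence of the $y_i\bar y_j$ then yields $M/\sqrt{\det M}\in SU(2)$, respectively $SU(1,1)$.

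Second, for $\varepsilon=+1$, uniqueness up to $SU(1,1)$ holds only between representatives of the same type. The map $g\mapsto 1/g$ exchanges $|g|<1$ and $|g|>1$. Its matrix $\begin{pmatrix}0&i\\ i&0\end{pmatrix}$ sends $\mathrm{diag}(1,-1)$ to its negative, so it is not in $SU(1,1)$. This imprecision is already present in the statement, but your phrase ``preserves the normalized Hermitian form'' is correct only within one fixed normalization.
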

Here, the M\"obius transformation is
$$
	\begin{pmatrix} a & b \\ c & d \end{pmatrix} \star z := \frac{a z + b}{ c z + d}.
$$

If $\omega$ satisfies \eqref{Eq:202507202002} or \eqref{Eq:202507202003}, then the graph $X$ of $\omega$ has negative or positive CMC, respectively.

\begin{lemma}\label{coro:mean}
	For any nonzero real number $\h$, a CMC $\h$ graph $X:\mathcal{U}\to\qthreep$ over a simply connected open set $\mathcal{U}$
	can be represented as follows for some meromorphic function $g$ with $g_z (z) \ne0$ for all $z$:
	\begin{itemize}
		\item[(1)] For  $\h<0$,
		      \begin{equation}\label{Eq:202602200111AM}
			      X (z) =\frac{\sqrt{-2/\h}|g_z (z) |}{1+|g (z) |^2}\begin{pmatrix} z\bar{z} & z \\ \bar{z} & 1 \end{pmatrix}.
		      \end{equation}
		\item[(2)] For $\h>0$,
		      \begin{equation}\label{Eq:202602200112AM}
			      X (z) =\frac{\sqrt{2/\h}|g_z (z) |}{|1-|g (z) |^2|}\begin{pmatrix} z\bar{z} & z \\ \bar{z} & 1 \end{pmatrix}.
		      \end{equation}
		      In this case, $|g (z) |\ne1$ for any $z$.
	\end{itemize}
\end{lemma}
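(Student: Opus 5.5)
The plan is to rewrite the graph parametrization \eqref{Eq:202505110555AM} using $\omega=-\ln\tau$, so that $X(z)=e^{\omega}\begin{pmatrix} z\bar z & z\\ \bar z & 1\end{pmatrix}$ with $e^{\omega}=1/\tau$, and then insert the explicit solutions of Liouville's equation given in Fact~\ref{Liouville}. By \eqref{Eq:202507101527}, the graph has CMC $\h$ precisely when $2e^{-2\omega}\omega_{z\bar z}=\h$. The whole argument therefore amounts to matching this equation with the normalizations used in Fact~\ref{Liouville}.

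\textbf{Case $\h<0$.} Set $c:=-2/\h>0$. Then \eqref{Eq:202507101527} becomes $ce^{-2\omega}\omega_{z\bar z}=-1$. Since $\mathcal U$ is simply connected, the first half of Fact~\ref{Liouville} gives a meromorphic $g$ with $g_z\neq 0$ and
\[
e^{\omega}=\frac{\sqrt{-2/\h}\,|g_z|}{1+|g|^2}.
\]
Substituting this into $X=e^{\omega}\begin{pmatrix} z\bar z & z\\ \bar z&1\end{pmatrix}$ gives \eqref{Eq:202602200111AM} directly.

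\textbf{Case $\h>0$.} Set $c:=2/\h$, so that $ce^{-2\omega}\omega_{z\bar z}=1$. The second half of Fact~\ref{Liouville} gives a meromorphic $g$ with values in $\hat{\mathbb C}\setminus\mathbb S^1$ and $g_z\neq0$. We have $e^\omega=\sqrt{c}|g_z|/(1-|g|^2)$ where $|g|<1$, and $e^{\omega}=\sqrt c|g_z|/(|g|^2-1)$ where $|g|>1$. Both expressions coincide with $\sqrt{2/\h}\,|g_z|/\bigl|1-|g|^2\bigr|$, which yields \eqref{Eq:202602200112AM}. The condition $|g(z)|\neq1$ holds because $g$ avoids $\mathbb S^1$; equivalently, $e^\omega$ must be finite and positive.

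The only point needing care is the behaviour at poles of $g$, where $|g_z|/(1\pm|g|^2)$ must be read in the local coordinate $1/g$. This quantity is invariant under $g\mapsto 1/g$, so it extends smoothly and positively across poles. I would also remark that connectedness of $\mathcal U$ together with $|g|\ne1$ forces either $|g|<1$ everywhere or $|g|>1$ everywhere. Beyond these points the proof is immediate from Fact~\ref{Liouville}, so I do not expect any real obstacle.
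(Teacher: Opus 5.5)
Your proposal is correct and follows the paper's own proof. The paper likewise writes $X=e^{\omega}\begin{pmatrix} z\bar z & z\\ \bar z & 1\end{pmatrix}$ and substitutes the CMC condition $2e^{-2\omega}\omega_{z\bar z}=\h$ into Fact~\ref{Liouville}, with $c=-2/\h$ for $\h<0$ and $c=2/\h$ for $\h>0$. You add two details the paper leaves implicit, and both are sound: the invariance of $|g_z|/(1+|g|^2)$ and $|g_z|/\bigl|1-|g|^2\bigr|$ under $g\mapsto 1/g$ at poles, and the dichotomy $|g|<1$ or $|g|>1$ on the connected domain.
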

\begin{proof}
	A proof follows from the equation \eqref{Eq:202507101519} and  Fact~\ref{Liouville}.
	We take $c=-2/\h$ if $\h<0$ and $c=2/\h$ if $\h>0$.
\end{proof}
\begin{remark}\label{Rem:202601180320PM}
	For any constant $\h <0$, let
	$ t = \sqrt{-2\h} \cosh{u} $
	where $z=u+iv$, which is obtained from Lemma~\ref{coro:mean} \textit{(1)} with $g=e^z$. The graph of $t$ is entire and has CMC $\h$ but
	$K= \h^2( 1 - \cosh^4 u)$, which is not bounded below. See \eqref{Eq:202601180923AM} for necessary formulae.
\end{remark}

Now we establish a Bernstein-type theorem for nonzero CMC graphs in $\mathbb{Q}^3_+$. Because of the orientation of the lightlike Gauss map $G$  of $X$ in \eqref{Eq:202507142137}, we need to separately consider the cases where $\h$ is positive or negative.

\begin{lemma}\label{Lem:202602190936PM}
	For CMC surfaces given by \eqref{Eq:202602200111AM} or \eqref{Eq:202602200112AM}, the relation $S(g) = 2Q$ holds.
\end{lemma}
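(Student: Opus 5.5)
The plan is a direct computation: substitute the explicit form of $e^{\omega}$ into the formula $Q=\omega_{zz}-\omega_z^2$ from \eqref{Eq:202507101519}. Comparing \eqref{Eq:202602200111AM} and \eqref{Eq:202602200112AM} with \eqref{Eq:202505110555AM}, we have $\tau^{-1}=e^{\omega}$. Hence in both cases
\[
\omega=\text{const}+\tfrac12\log g_z+\tfrac12\log\bar g_z-\log|1+\varepsilon g\bar g|,
\]
with $\varepsilon=1$ for $\h<0$ and $\varepsilon=-1$ for $\h>0$. The absolute value handles both sub-cases $|g|<1$ and $|g|>1$ at once, since $\log|1-g\bar g|$ has the same $z$-derivatives in either case. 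The additive constant $\log\sqrt{c}$ plays no role.

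Write $D:=1+\varepsilon g\bar g$ and use $\bar g_z=0$. Differentiating gives
\[
\omega_z=\frac{g_{zz}}{2g_z}-\frac{\varepsilon g_z\bar g}{D}.
\]
Differentiating once more gives
\[
\omega_{zz}=\frac12\Big(\frac{g_{zz}}{g_z}\Big)_z-\frac{\varepsilon g_{zz}\bar g}{D}+\frac{g_z^2\bar g^2}{D^2},
\]
where we used $\varepsilon^2=1$. Squaring the first derivative gives
\[
\omega_z^2=\frac14\Big(\frac{g_{zz}}{g_z}\Big)^2-\frac{\varepsilon g_{zz}\bar g}{D}+\frac{g_z^2\bar g^2}{D^2}.
\]

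Subtracting, all terms involving $\bar g$ cancel exactly. This cancellation is the key point, and it is what makes $Q$ holomorphic, as the Codazzi equation requires. What remains is
\[
Q=\frac12\frac{g_{zzz}}{g_z}-\frac12\Big(\frac{g_{zz}}{g_z}\Big)^2-\frac14\Big(\frac{g_{zz}}{g_z}\Big)^2=\frac12 S(g),
\]
that is, $S(g)=2Q$.

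The only point needing care, and the main obstacle, is that $g$ is meromorphic, so the computation above is valid only away from the poles of $g$. Near a pole, replace $g$ by $\hat g=1/g$. For $\h<0$, $1/g=\begin{pmatrix}0&1\\-1&0\end{pmatrix}\star g$ with $\begin{pmatrix}0&1\\-1&0\end{pmatrix}\in SU(2)$ (up to sign), so $\hat g$ is holomorphic near the point. The expression $|g_z|/(1+|g|^2)$ is unchanged, and the Schwarzian is invariant under Möbius transformations, so the same computation applies to $\hat g$ there. For $\h>0$, the same substitution $1/g$ interchanges the two expressions in \eqref{Eq:202507202003}, which again leaves $e^{\omega}$ unchanged. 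Alternatively, both sides of $S(g)=2Q$ are continuous on $\mathcal{U}$ ($Q$ is holomorphic and $S(g)$ extends holomorphically across simple poles of $g$ with $g_z\neq 0$), so equality on the dense complement of the poles suffices.
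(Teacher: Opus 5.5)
Your proposal is correct and is essentially the paper's proof. The paper only says the identity follows by direct calculation from $Q=-\tau_{zz}/\tau$ in \eqref{Eq:202601180923AM}, which is the same formula as your $Q=\omega_{zz}-\omega_z^2$ under $\omega=-\ln\tau$; your computation, including the exact cancellation of all $\bar g$-terms, checks out, and your handling of the poles of $g$ via $g\mapsto 1/g$ and M\"obius invariance of the Schwarzian fills in a point the paper leaves implicit.
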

\begin{proof}
	A proof follows from direct calculations using \eqref{Eq:202601180923AM}.%
\end{proof}
\begin{proposition}\label{prop:negative}
	Let $\h$ be an arbitrary negative real number.
	Suppose that the Gaussian curvature $K$ of an entire CMC $\h$ graph in $\qthreep$ is bounded below.
	Then the image of $X$ is congruent to a part of the image of
	\begin{equation}\label{Eq:202507072027}
		\mathbb{C} \ni z \mapsto \frac{\sqrt{-2/\h}}{1+  z\bar{z}} \begin{pmatrix} z \bar{z} & z \\ \bar{z} & 1 \end{pmatrix}.
	\end{equation}
	It is in fact a hypersphere of $\qthreep$.
\end{proposition}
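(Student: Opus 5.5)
By Lemma~\ref{coro:mean}(1), applied with $\mathcal{U}=\mathbb{C}$, the graph is written as in \eqref{Eq:202602200111AM} with a meromorphic $g$ on $\mathbb{C}$ such that $g_z\neq 0$. Here $c=-2/\h$ and
\[
e^{\omega}=\frac{1}{\tau}=\frac{\sqrt{c}\,|g_z|}{1+|g|^2}.
\]
The plan is to turn the lower bound on $K$ into a Liouville-type statement that forces $S(g)\equiv 0$. Then $g$ is a M\"obius transformation, and an isometry of $\qthreep$ brings the surface to \eqref{Eq:202507072027}.

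\emph{Step 1 (linearize $g$).} Since $g$ is locally univalent and meromorphic on $\mathbb{C}$, the Schwarzian $S(g)$ is entire. Write $g=y_1/y_2$, where $y_1,y_2$ are entire solutions of $y''+\tfrac12 S(g)\,y=0$ with Wronskian $y_1'y_2-y_1y_2'=1$. This is the standard representation, and since $\mathbb{C}$ is simply connected it is global. Then $g_z=1/y_2^2$, so
\[
\frac{1+|g|^2}{|g_z|}=|y_1|^2+|y_2|^2, \qquad \tau=\tfrac{1}{\sqrt{c}}\bigl(|y_1|^2+|y_2|^2\bigr).
\]

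\emph{Step 2 (curvature bound gives $S(g)\equiv0$).} By \eqref{Eq:202507101519} and Lemma~\ref{Lem:202602190936PM},
\[
\h^2-K=4|Q|^2e^{-4\omega}=\frac{|S(g)|^2\,(|y_1|^2+|y_2|^2)^4}{c^2}.
\]
So $K\ge -C$ gives $|S(g)|\,(|y_1|^2+|y_2|^2)^2\le C'$ on $\mathbb{C}$. Hence each of the entire functions $S(g)y_1^4$, $S(g)y_1^2y_2^2$ and $S(g)y_2^4$ is bounded, and by Liouville's theorem each is constant. Suppose $S(g)\not\equiv0$. On the open dense set where $S(g)\neq0$, the ratio $g^4=(y_1/y_2)^4=S(g)y_1^4/(S(g)y_2^4)$ is then constant, which contradicts $g_z\neq0$. (Any degenerate case where one of the constants vanishes forces $y_1\equiv0$ or $y_2\equiv0$, which is impossible since the Wronskian is $1$.) Therefore $S(g)\equiv 0$, $y_1,y_2$ are linear, and $g=A\star z$ for some $A\in\mathrm{SL}(2,\mathbb{C})$.

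\emph{Step 3 (normalization).} Now $\tau=\tfrac1{\sqrt c}\,\bigl\|A\,(z,1)^t\bigr\|^2$, so
\[
X=\tfrac{1}{\tau}\begin{pmatrix} z\\ 1\end{pmatrix}\begin{pmatrix} \bar z & 1\end{pmatrix}.
\]
Equivalently, $X$ lies on the hyperplane $\langle X,D\rangle=\mathrm{const}$, with $D$ corresponding to the positive Hermitian matrix $A^*A$; this $D$ is timelike and the section is of type (S-i). Alternatively, Fact~\ref{Liouville} lets us replace $g$ by $U\star g$ with $U\in SU(2)$, and writing $A=UP$ with $P$ upper triangular (QR decomposition) reduces $g$ to an affine map $\alpha z+\beta$. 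The affine change $z\mapsto \alpha z+\beta$ is realized by $\varphi_B$ with $B=\begin{pmatrix}\alpha^{1/2}&\beta\alpha^{-1/2}\\0&\alpha^{-1/2}\end{pmatrix}$ acting on \eqref{Eq:202505110555AM}, by a computation of the same type as the one for $\Pi_t$. This transforms the surface into \eqref{Eq:202507072027}.

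The main obstacle is Step 2: finding the right holomorphic quantities to which Liouville's theorem applies. The natural candidate $S(g)/g_z^2$ used in the ZMC case is not bounded here, and the lifting to $(y_1,y_2)$ is what makes the bound usable. Step 3 is routine, but care is needed with the congruence and with identifying the hyperplane type via Lemma~\ref{Lem:202601171034AM}.
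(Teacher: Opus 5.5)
Your proof is correct. It shares the paper's skeleton: Liouville representation, $S(g)=2Q$, Liouville's theorem forcing $S(g)\equiv 0$, then $g=A\star z$ and a congruence. The difference is how you force $S(g)\equiv 0$.

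The paper applies Liouville's theorem only to $S(g)/g_z^2$, which is bounded because $(1+|g|^2)^4\ge 1$. It then shows the resulting constant vanishes by a case split. If $g$ has a pole, $S(g)/g_z^2$ vanishes there. If $g$ has no poles, it reruns the ZMC argument: $M=1/g_z$ satisfies $MM_{zzz}=0$, so $M$ is a zero-free quadratic polynomial and hence constant.

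Your lift $g=y_1/y_2$ treats poles and zeros of $g$ on an equal footing. It replaces both the case split and the third-order ODE with one observation: two bounded entire functions have ratio $g^4$. The only extra input is the global lift, which simple connectivity provides; concretely, $1/g_z$ is entire with only double zeros, so it has an entire square root $y_2$.

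The lift also gives the closed formula $\tau=\|A(z,1)^t\|^2/\sqrt{c}$, which exhibits the graph directly as a hyperplane section of type (S-i). This buys two small things.
\begin{itemize}
\item It shows the surface is a \emph{sphere}, which is the correct conclusion. The word ``hypersphere'' in the statement is a slip; compare the lemma characterizing umbilic surfaces with $H<0$ as spheres, and part (2) of the Main Theorem.
\item Your QR normalization makes $z\mapsto w$ a bijection of $\mathbb{C}$, so the congruent image is exactly the image of \eqref{Eq:202507072027}. Applying $\varphi_A$ directly, as the paper does, sends the pole of $A\star z$ (if there is one) to $\sqrt{c}\begin{pmatrix}1&0\\0&0\end{pmatrix}$, which lies outside that image.
\end{itemize}

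One correction to your closing commentary. Since $g_z=1/y_2^2$, your bounded function $S(g)y_2^4$ is exactly $S(g)/g_z^2$. So that ``natural candidate'' is bounded here, and the paper uses precisely this boundedness. What the lift really contributes is the second bounded function $S(g)y_1^4$, which is the same quantity computed for $1/g$.
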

\begin{proof}
	If the entire graph $X$ in $\qthreep$ has negative CMC, then there exist a constant $c$ and a meromorphic function
	$g:\mathbb{C}\to\hat{\mathbb{C}}$ with $g_z(z) \ne 0$ for all $z$ such that
	\begin{align}\label{Eq:202511151536}
		e^{2\omega} = \frac{c g_z \overline{g_z}}{(1+ g\bar{g})^2}.
	\end{align}
	Then, using Lemma~\ref{Lem:202602190936PM}, we obtain that
	$$
		C_0 > H^2-K = 4 e^{-4\omega} |Q|^2 = (1+ g\bar{g})^4 \left| \frac{S(g)}{cg_z^2} \right|^2 > \left| \frac{S(g)}{cg_z^2} \right|^2.
	$$
	As before, $S(g)/g_z^2$ is defined all over $\mathbb{C}$ and holomorphic everywhere. So Liouville theorem implies that there exists a constant $C_1\in\mathbb{C}$ such that ${S(g)}/{(cg_z^2)}=C_1$ .
	Next, we show that $C_1=0$. When $g$ has a pole, the relation ${S(g)}/{(cg_z^2)}=C_1$ and holomorphicity of $S(g)$  imply $C_1=0$. When $g$ has no pole, we also obtain $C_1=0$ by the same argument of the ZMC case (see the proof of Proposition \ref{prop:zmc}).

	Recall that $S(g(z))=0$ is only and only if $g(z)= A \star z$ for some $A \in \sltc$,
	which with Corollary~\ref{coro:mean} implies that
	\[
		X(z,\bar{z}) = e^{\omega(z,\bar{z})} \begin{pmatrix} z\bar{z} & z \\ \bar{z} & 1 \end{pmatrix} = \sqrt{-\frac{2}{\h}}\frac{1 }{|az+b|^2+|cz+d|^2}\begin{pmatrix} z\bar{z} & z \\ \bar{z} & 1 \end{pmatrix}
	\]
	for some $a,b,c,d \in \mathbb{C}$ with $ad-bc=1$.
	Direct calculations show that
	$$
		A X (z, \bar{z}) A^* = \sqrt{-\frac{2}{\h}}\frac{1}{ w\bar{w} +1} \begin{pmatrix} w \bar{w} & w \\ \bar{w} & 1 \end{pmatrix}, \quad \text{where}\quad A := \begin{pmatrix} a & b \\ c & d \end{pmatrix}, \ w := A \star z.
	$$
	So we get the conclusion.
\end{proof}

Now we consider  entire positive CMC graphs in $\qthreep$.

\begin{proposition}\label{prop:positive}
	There exists no entire graph in $\qthreep$ which has positive CMC.
\end{proposition}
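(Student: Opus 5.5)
Suppose, for contradiction, that $X$ is an entire graph with constant mean curvature $\h>0$. By \eqref{Eq:202507101527}, the function $\omega=-\ln\tau$ is then defined on all of $\mathbb{C}$ and satisfies $\omega_{z\bar z}=\tfrac{\h}{2}e^{2\omega}$. Note that no curvature assumption appears in the statement, so the argument has to rest on this PDE alone.

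\textbf{First route: Lemma~\ref{coro:mean}(2).} Since $\mathbb{C}$ is simply connected, Lemma~\ref{coro:mean}(2) gives a meromorphic $g$ on $\mathbb{C}$ with $g_z\neq 0$ and $|g|\neq 1$ everywhere, and
\[
e^{\omega}=\frac{\sqrt{2/\h}\,|g_z|}{\bigl|1-|g|^2\bigr|}.
\]
Because $\mathbb{C}$ is connected and $|g|$ never equals $1$, either $|g|<1$ on all of $\mathbb{C}$ or $|g|>1$ on all of $\mathbb{C}$.
\begin{itemize}
\item If $|g|<1$ everywhere, then $g$ has no poles, so it is a bounded entire function. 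By Liouville's theorem $g$ is constant, which contradicts $g_z\neq 0$.
\item If $|g|>1$ everywhere, then $1/g$ is holomorphic on $\mathbb{C}$: it is bounded by $1$, and at poles of $g$ it simply vanishes. So $1/g$ is constant, hence $g$ is constant, again a contradiction.
\end{itemize}
In either case no such $g$ exists, so there is no entire graph with $\h>0$.

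\textbf{Second route: Omori--Yau / Osserman type argument.} As a cross-check independent of the representation, one can argue directly from the PDE. The function $\omega$ is subharmonic and satisfies $\Delta\omega = 2\h e^{2\omega}$, where $\Delta=4\partial_z\partial_{\bar z}$. By the Keller--Osserman theorem, the inequality $\Delta\omega\ge f(\omega)$ with $f(s)=2\h e^{2s}$ admits no entire solution on $\mathbb{R}^2$, because $\int^\infty \bigl(\int^s f\bigr)^{-1/2}\,ds<\infty$. This could be written out by comparing $\omega$ on a disk of radius $R$ with the explicit radial solution
\[
e^{\omega_R}=\frac{\sqrt{2/\h}\,R}{R^2-|z|^2},
\]
which blows up on the boundary circle. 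The maximum principle then gives $\omega\le\omega_R$ on the disk; letting $R\to\infty$ forces $e^{\omega}\le 0$, which is impossible.

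I would present the first route as the proof and keep the second in reserve. The main point to check in the first route is that the meromorphic $g$ from Fact~\ref{Liouville} really is globally defined on all of $\mathbb{C}$ and avoids the unit circle. This is exactly what Lemma~\ref{coro:mean}(2) asserts for simply connected $\mathcal{U}=\mathbb{C}$, so the argument reduces to the two Liouville-theorem cases above. If that global statement were in doubt, I would fall back on the comparison argument, whose only delicate step is the maximum-principle comparison with $\omega_R$.
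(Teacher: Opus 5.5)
Your first route is the paper's proof: Lemma~\ref{coro:mean}(2) on the simply connected domain $\mathbb{C}$, connectedness forcing $|g|<1$ everywhere or $|g|>1$ everywhere, and Liouville's theorem applied to $g$ or to $1/g$, which contradicts $g_z\neq 0$. Your backup comparison with the radial solution $\omega_R$ is also sound, since the nonlinearity $s\mapsto e^{2s}$ is increasing, and it has the merit of not relying on the global Liouville representation, but it is not needed here.
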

\begin{proof}
	Suppose  that there exists an entire function $\omega$ whose  graph in $\qthreep$  has  positive CMC $\h$. Then, from  Lemma~\ref{coro:mean}, there exists a meromorphic function $g:\mathbb{C}\to\hat{\mathbb{C}}\setminus\mathbb{S}^1$ with $g_z(z) \ne 0$ for all $z$  such  that
	\begin{align}\label{Eq:202507081352}
		e^{2\omega} = \frac{2 g_z \overline{g_z}}{ \h (1-g\bar{g})^2}.
	\end{align}
	Then, by continuity of $g$, either $|g(z)|<1$ for all $z\in\mathbb{C}$ or $|g(z)|>1$ for all $z\in\mathbb{C}$. In the former case, $g$ must be an  entire bounded function, hence is constant by Liouville theorem. In the latter case, $h(z):=1/g(z)$ is an entire bounded function, hence is constant. In both cases, $g$ is a constant function, but then $g_z(z) = 0$ for all $z \in \mathbb{C}$, which is a contradiction.
\end{proof}

\begin{proof}[A proof of the Main Theorem]
	A proof follows from Propositions \ref{prop:zmc}, \ref{prop:negative}, \ref{prop:positive} and Section~\ref{Spheres}.
\end{proof}
\begin{proof}[A proof of Corollary~\ref{Coro:202603071137AM}]
	The condition implies that $S$ is compact, hecne the Gaussian curvature is bounded below. Then $S$ minus a point satisfies the condition of the Main theorem. Hence, it is either a sphere or a horosphere. But a horosphere does not project to the entire ideal boundary. It is clear that a sphere satisfies the condition of the statement.
\end{proof}

\begin{remark}
	Recall the relation $S(g)=2Q$ between the holomorphic or meromorphic function $g$ and the Hopf differential $Q$ in Lemmas \ref{lemma:Schwarzian_ZMC} and \ref{Lem:202602190936PM}.
  
        $\bullet$ For CMC $1$ surfaces in the hyperbolic $3$-space $\mathbb{H}^3$, a similar relation $S(g)-S(G)=2Q$ is known, where $G$ is the hyperbolic Gauss map and $g$ is the secondary Gauss map of a CMC $1$ surface in $\mathbb{H}^3$. See \cite{UY1}.

        $\bullet$ In the forthcoming paper \cite{ALY3}, the authors show that there is a correspondence between CMC $H$ surfaces with $H<,=$ or $>0$ and (spacelike) ZMC surfaces in $\mathbb{E}^3$,  $\mathbb{I}^3$,  $\mathbb{L}^3$, respectively, of which $g$ becomes the Gauss map.
\end{remark}


\end{document}